\newtheorem{theorem}{Theorem}[section]
\newtheorem{lemma}[theorem]{Lemma}
\newtheorem{proposition}[theorem]{Proposition}
\newtheorem{corollary}[theorem]{Corollary}
\numberwithin{equation}{section}
\newcommand{\tx}[1]{\textnormal{#1}}
\newcommand{\gp}[2]{\langle #1 \mid #2 \rangle}
\newcommand{\s}{\sigma}
\newcommand{\G}{\Gamma}
\newcommand{\calK}{\mathcal K}
\newcommand{\calP}{\mathcal P}
\newcommand{\calQ}{\mathcal Q}
\newcommand{\calR}{\mathcal R}
\newcommand{\GP}{\Gamma^+(\calP)}
\newcommand{\ch}[1]{\overline{#1}}
\newcommand{\GcP}{\Gamma^+(\ch{\calP})}
\newcommand{\GcQ}{\Gamma^+(\ch{\calQ})}
\newcommand{\GQ}{\Gamma^+(\calQ)}
\newcommand{\GR}{\Gamma^+(\calR)}
\newcommand{\GK}{\Gamma^+(\calK)}
\newcommand{\GPQ}{\G^+(\calP \mix \calQ)}
\newcommand{\GcPQ}{\GP \comix \GQ}
\newcommand{\mix}{\diamond}
\newcommand{\comix}{\boxempty}
\newcommand{\eps}{\epsilon}
\newcommand{\tref}[1]{Theorem~\textup{\ref{#1}}}
\newcommand{\pref}[1]{Proposition~\textup{\ref{#1}}}
\newcommand{\cref}[1]{Corollary~\textup{\ref{#1}}}
\newcommand{\lref}[1]{Lemma~\textup{\ref{#1}}}
\newcommand{\eref}[1]{Equation~\textup{\ref{#1}}}
\begin{document}

\title{Mixing Chiral Polytopes}

\author{Gabe Cunningham\\
Department of Mathematics\\
Northeastern University\\
Boston, Massachusetts,  USA, 02115
}

\date{ \today }
\maketitle

\begin{abstract}
An abstract polytope of rank $n$ is said to be \emph{chiral} if its automorphism group has two orbits on 
the flags, such that adjacent flags belong to distinct orbits. Examples of chiral polytopes have been difficult
to find. A ``mixing'' construction lets us combine polytopes to build new regular and chiral polytopes. By using the chirality 
group of a polytope, we are able to give simple criteria for when the mix of two polytopes is chiral.

\vskip.1in
\medskip
\noindent
Key Words: abstract regular polytope, chiral polytope, chiral maps, chirality group. 

\medskip
\noindent
AMS Subject Classification (2000):  Primary: 52B15.  Secondary:  51M20, 05C25.

\end{abstract}

\section{Introduction}

	The study of abstract polytopes is a growing field, uniting combinatorics with geometry and
	group theory. At the forefront are the (abstract) regular polytopes, including the regular convex
	polytopes, regular tessellations of space-forms, and many new combinatorial structures with
	maximal symmetry. Recently, the study of chiral polytopes has flourished. Chiral polytopes are
	``half-regular''; the action of the automorphism group on the flags has two orbits, and adjacent
	flags belong to distinct orbits. A chiral polytope occurs in two enantiomorphic (mirror-image) forms, and 
	though these forms are isomorphic as polytopes, the particular orientation chosen is usually relevant.
	
	Chiral maps (also called irreflexible maps) have been studied for some time (see \cite{cm}), and the study of 
	chiral maps and hypermaps continues to yield interesting developments (for example, see \cite{aj}).
	However, it was only with the introduction of abstract polytopes that the notion of chirality was defined for 
	polytopes in ranks 4 and higher \cite{SW1}. 
	
	Examples of chiral polytopes have been hard to find. A few families have been found in ranks
	$3$ and $4$, but only a handful of examples are known in ranks $5$ and higher.
	There are several impediments
	to constructing new examples. Foremost is that the $(n-2)$-faces and the co-faces at the edges 
	of a chiral polytope must be regular. Therefore, it is not possible to repeatedly
	extend a chiral polytope to higher and higher ranks -- we need genuinely new examples in each rank.
	So far, nobody has found a ``nice'' family of chiral polytopes of arbitrary rank. Indeed, it
	was only recently that Pellicer demonstrated conclusively for the first time that there are
	chiral polytopes in every rank \cite{pel} .
	
	Since it is so difficult to extend chiral polytopes to higher ranks, we need another tactic for building
	new chiral polytopes. In \cite{const}, the authors adapted the mixing technique used in \cite{arp} to chiral 
	polytopes. Two main difficulties arise. The first is that the mix of two polytopes is not necessarily 
	polytopal: specifically, the resulting group does not necessarily have the required intersection property. 
	However, under some fairly mild conditions on the polytopes being mixed, we can ensure that the mix
	is, in fact, polytopal. The second difficulty is that we need a way of determining whether
	the mix is chiral or not, which can be difficult to do directly. We would
	like simple combinatorial criteria which will tell us when the mix is chiral.

	By using the idea of the \emph{chirality group} of a polytope, introduced in \cite{bjns} for hypermaps and in 
	\cite{const} for polytopes, we are able to outline such criteria. We then apply these results to construct
	new examples of chiral $5$-polytopes. We also see how to construct infinitely many chiral $n$-polytopes
	given a single chiral $n$-polytope satisfying some mild conditions.
	
	We start by giving some background information on regular and chiral abstract polytopes in Section 2. 
	In Section 3, we introduce the mixing operation for chiral and directly regular polytopes,
	and we give a few results (including one new one) for when the mix of two polytopes is again a polytope.
	In Section 4, we define the chirality group of a polytope, which we then use to give several simple criteria
	for when the mix of two or more polytopes is chiral. Finally, in Section 5 we highlight
	the main results with several constructions that build new chiral polytopes.

\section{Polytopes}

	General background information on abstract polytopes can be found in \cite[Chs. 2, 3]{arp}, and information
	on chiral polytopes specifically can be found in \cite{SW1}.
	Here we review the concepts essential for this paper.

	\subsection{Definition of a polytope}

		Let $\calP$ be a ranked partially ordered set whose elements will be called \emph{faces}. 
		The faces of $\calP$ will range in rank from $-1$ to $n$, and a face of rank $j$ is called a 
		\emph{$j$-face}. The $0$-faces, $1$-faces, and $(n-1)$-faces are also 
		called \emph{vertices}, \emph{edges}, and \emph{facets}, respectively. A \emph{flag} of
		$\calP$ is a maximal chain. We say that two flags are adjacent ($j$-adjacent) if they differ in exactly
		one face (their $j$-face, respectively). If $F$ and $G$ are faces of $\calP$
		such that $F < G$, then the \emph{section} $G / F$ consists of those faces $H$ such that
		$F \leq H \leq G$.
		
		We say that $\calP$ is an \emph{(abstract) polytope of rank $n$}, also called an \emph{$n$-polytope}, 
		if it satisfies the following four properties:
		
		\begin{enumerate}
		\item There is a unique greatest face $F_n$ of rank $n$ and a unique least face $F_{-1}$ of rank $-1$.
		\item Each flag of $\calP$ has $n+2$ faces.
		\item $\calP$ is \emph{strongly flag-connected}, meaning that if $\Phi$ and $\Psi$
		are two flags of $\calP$, then there is a sequence of flags $\Phi = \Phi_0, \Phi_1, \ldots, \Phi_k = \Psi$ 
		such that for $i = 0, \ldots, k-1$, the flags $\Phi_i$ and $\Phi_{i+1}$ are adjacent, and each
		$\Phi_i$ contains $\Phi \cap \Psi$.
		\item (Diamond condition): Whenever $F < G$, where $F$ is a $(j-1)$-face and $G$ is a $(j+1)$-face
		for some $j$, then there are exactly two $j$-faces $H$ with $F < H < G$.
		\end{enumerate}
		
		If $F$ is a $j$-face and $G$ is a $k$-face of a polytope with $F \leq G$, then the section $G/F$ is a
		($k-j-1$)-polytope itself. We can identify a face $F$ with the section $F/F_{-1}$; if $F$ is a $j$-face,
		then $F/F_{-1}$ is a $j$-polytope. We call the section $F_n/F$ the \emph{co-face at $F$}. The co-face
		at a vertex is also called a \emph{vertex-figure}.

		We sometimes need to work with \emph{pre-polytopes}, which are ranked partially ordered sets that
		satisfy the first, second, and fourth property above, but not necessarily the third. In this paper, all 
		of the pre-polytopes we encounter will be \emph{flag-connected}, meaning that if $\Phi$ and $\Psi$ are two 
		flags, there is a sequence of flags $\Phi = \Phi_0, \Phi_1, \ldots, \Phi_k = \Psi$ such
		that for $i = 0, \ldots, k-1$, the flags $\Phi_i$ and $\Phi_{i+1}$ are adjacent (but we do not require each
		flag to contain $\Phi \cap \Psi$). When working with pre-polytopes, we apply all the same terminology as 
		with polytopes. 
	
	\subsection{Regularity}
	
		For polytopes $\calP$ and $\calQ$, an \emph{isomorphism} from $\calP$ to $\calQ$ is an incidence- and rank-preserving bijection on the set of 
		faces. An isomorphism from $\calP$ to itself is an \emph{automorphism} of $\calP$. We denote the group of 
		all automorphisms of $\calP$ by $\G(\calP)$. There is a natural action of $\G(\calP)$ on the
		flags of $\calP$, and we say that $\calP$ is \emph{regular} if this action is transitive.
		This coincides with any of the usual definitions of regularity for convex polytopes.
		
		Given a regular polytope $\calP$, fix a \emph{base flag} $\Phi$. Then the automorphism
		group $\G(\calP)$ is generated by involutions $\rho_0, \ldots, \rho_{n-1}$
		where $\rho_i$ maps $\Phi$ to the flag $\Phi^i$ that is $i$-adjacent to $\Phi$. 
		These generators satisfy $(\rho_i \rho_j)^2 = \eps$ for all $i$ and $j$ such that $|i - j| \geq 2$.
		We say that $\calP$ has (\emph{Schl\"afli}) \emph{type} $\{p_1,\ldots,p_{n-1}\}$ if
		for each $i = 1, \ldots, n-1$ the order of $(\rho_{i-1} \rho_i)$ is $p_i$ (with $2 \leq p_i \leq \infty$).
		We also use $\{p_1, \ldots, p_{n-1}\}$ to represent the universal regular polytope of this type,
		which has an automorphism group with no relations other than those mentioned above.
		We denote $\G(\{p_1, \ldots, p_{n-1}\})$ by $[p_1, \ldots, p_{n-1}]$.
		Whenever this universal polytope corresponds to a regular convex polytope, then the name used
		here is the same as the usual Schl\"afli symbol for that polytope (see \cite{coxeter}).
		
		For $I \subseteq \{0, 1, \ldots, n-1\}$ and a group $\G = \langle \rho_0, \ldots, \rho_{n-1} \rangle$,
		we define $\G_I := \langle \rho_i \mid i \in I \rangle$. 
		The strong flag-connectivity of polytopes induces the following {\em intersection property\/} in the group:
		\begin{equation}
		\label{eq:reg-int}
		\G_I \cap \G_J = \G_{I \cap J}.
		\end{equation}

		In general, if $\G = \langle \rho_0, \ldots, \rho_{n-1} \rangle$ is a group such that each
		$\rho_i$ has order $2$ and such that $(\rho_i \rho_j)^2 = \eps$ whenever $|i - j| \geq 2$, then
		we say that $\G$ is a \emph{string group generated by involutions} (or \emph{sggi}). If
		$\G$ also satisfies the intersection property given above, then we call $\G$ a \emph{string
		C-group}. There is a natural way of building a regular polytope $\calP(\G)$ from a string
		C-group $\G$ such that $\G(\calP(\G)) = \G$ (see \cite[Ch. 2E]{arp}). Therefore, we get a one-to-one 
		correspondence between regular $n$-polytopes and string C-groups on $n$ specified generators.

	\subsection{Direct Regularity and Chirality}

		If $\calP$ is a regular polytope with automorphism group $\G(\calP)$ generated
		by $\rho_0, \ldots, \rho_{n-1}$, then the elements
		\[ \s_i := \rho_{i-1} \rho_i \]
		(for $i=1, \ldots, n-1$) generate the \emph{rotation subgroup} $\GP$ of $\G(\calP)$, which has index at 
		most~$2$. We say that $\calP$ is \emph{directly regular} if this index is $2$. This is essentially an
		orientability condition; for example, the directly regular polyhedra correspond to orientable maps.
		The convex regular polytopes are all directly regular.
	
		We say that an $n$-polytope $\calP$ is \emph{chiral} if the action of $\G(\calP)$ on the flags
		of $\calP$ has two orbits such that adjacent flags are always in distinct orbits.
		For convenience, we define $\GP = \G(\calP)$ whenever $\calP$ is chiral.
		Given a chiral polytope $\calP$, fix a base flag $\Phi=\{F_{-1}, F_0, \ldots, F_n\}$.
		Then the automorphism group $\GP$ is generated by elements $\sigma_1, \ldots, \sigma_{n-1}$,
		where $\s_i$ acts on $\Phi$ the same way that $\rho_{i-1} \rho_i$ acts on the base flag of a regular
		polytope. That is, $\s_i$ sends $\Phi$ to $\Phi^{i,i-1}$. For $i < j$, we get that $(\s_i \cdots
		\s_j)^2 = \eps$. In analogy to regular polytopes, if the order of each $\s_i$ is $p_i$, 
		we say that the $\emph{type}$ of $\calP$ is $\{p_1, \ldots, p_{n-1}\}$.

		The automorphism groups of chiral polytopes and the rotation groups of directly regular polytopes satisfy 
		an intersection property analogous to that for string C-groups. Let $\G^+ = \langle \s_1, \ldots, \s_{n-1}
		\rangle$ be the rotation group of a
		chiral or directly regular polytope. For $1 \leq i \leq j \leq n-1$, we define 
		\[ \tau_{i,j}:= \s_i \s_{i+1} \cdots \s_j, \]
		and for $0 \leq i \leq n$ we let $\tau_{0,i}:=\tau_{i,n}:= \eps$.
		For $I \subseteq \{-1, 0, \ldots, n\}$ we define
		\[ \G^+_I := \langle\tau_{i,j}\mid i\leq j \mbox{ and } i-1,j\in I\rangle.  \]
		Then the \emph{intersection property} for $\G^+$ is given by:
		\begin{equation}
		\label{eq:chiral-int}
		\G^+_I \cap \G^+_J = \G^+_{I\cap J}  
		\end{equation}
	
		If $\G^+$ is a group generated by elements $\s_1, \ldots, \s_{n-1}$ such that $(\s_i \cdots \s_j)^2 =
		\eps$ for $i < j$, and if $\G^+$ satisfies the intersection property above,
		then $\G^+$ is either the automorphism group of a chiral $n$-polytope or the rotation subgroup 
		of a directly regular polytope. In particular, it is the rotation subgroup
		of a directly regular polytope if and only if there is an automorphism of $\G^+$ that sends $\s_1$ to 
		$\s_1^{-1}$, $\s_2$ to $\s_1^2 \s_2$, and fixes every other generator.
		
		Suppose $\calP$ is a chiral polytope with base flag $\Phi$ and with $\GP = \langle \s_1, \ldots,
		\s_{n-1} \rangle$. Let $\ch{\calP}$ be the chiral polytope with the same underlying face-set
		as $\calP$, but with base flag $\Phi^0$. Then $\G^+(\ch{\calP}) = \langle \s_1^{-1}, \s_1^2 \s_2, \s_3,
		\ldots, \s_{n-1} \rangle$. We call $\ch{\calP}$ the \emph{enantiomorphic form} or \emph{mirror image}
		of $\calP$. Though $\calP \simeq \ch{\calP}$, there is no automorphism of $\calP$ that takes
		$\Phi$ to $\Phi^0$.
		
		Let $\G^+ = \langle \s_1, \ldots, \s_{n-1} \rangle$, and let $w$ be a word in the free group on these 
		generators. We define the \emph{enantiomorphic} (or \emph{mirror image}) word $\ch{w}$ of $w$ to be the word obtained from $w$ by replacing every occurrence of $\s_1$ by $\s_1^{-1}$ and $\s_2$ by $\s_1^2\s_2$, 
		while keeping all $\s_j$ with $j\geq 3$ unchanged. Then if $\G^+$ is the rotation subgroup of a directly
		regular polytope, the elements of $\G^+$ corresponding to $w$ and $\ch{w}$ are conjugate in $\G$.
		On the other hand, if $\G^+$ is the automorphism group of a chiral polytope, then $w$ and $\ch{w}$ need 
		not even have the same period. Note that $\ch{\ch{w}} = w$ for all words $w$.
		
		The sections of a regular polytope are again regular, and the sections of a chiral polytope are either
		directly regular or chiral. Furthermore, for a chiral $n$-polytope, all the $(n-2)$-faces and all the 
		co-faces at edges must be directly regular. As a consequence, if $\calP$ is a chiral polytope, it may be
		possible to extend it to a chiral polytope having facets isomorphic to $\calP$, but it will then
		be impossible to extend that polytope once more to a chiral polytope. 

		Chiral polytopes only exist in ranks 3 and higher.
		The simplest examples of are the toroidal maps $\{4,4\}_{(b,c)}$, $\{3,6\}_{(b,c)}$ and 
		$\{6,3\}_{(b,c)}$, with $b,c\neq 0$ and $b\neq c$ (see \cite{cm}). These give rise to chiral $4$-polytopes
		having toroidal maps as facets and/or vertex-figures. More examples of chiral 4- and 5-polytopes can
		be found in \cite{chp}.
	
		If a regular or chiral $n$-polytope $\calP$ has facets $\calP_1$ and vertex-figures $\calP_2$, we say 
		that $\calP$ is of \emph{type} $\{\calP_1,\calP_2\}$. Given regular or chiral polytopes $\calP_1$ and 
		$\calP_2$, if there are any regular or chiral polytopes of type $\{\calP_1, \calP_2\}$, then there is a 
		universal one that covers all other regular or chiral polytopes of that type. We then also use $\{\calP_1, 
		\calP_2\}$ to denote this universal polytope.

		Let $\calP$ and $\calQ$ be two polytopes (or flag-connected pre-polytopes) of the same rank, not 
		necessarily regular or chiral. A mapping $\gamma: \calP \to \calQ$ is called a
		\emph{covering} if it preserves incidence of faces, ranks of faces, and adjacency of flags; then $\gamma$ is
		necessarily surjective, by the flag-connectedness of $\calQ$. We say that $\calP$ \emph{covers} $\calQ$
		if there exists a covering $\gamma: \calP \to \calQ$.
		
		If $\calP$ and $\calQ$ are chiral or directly regular $n$-polytopes, their rotation groups
		are both quotients of 
		\[ W^+ := [\infty, \ldots, \infty]^+ = \langle \s_1, \ldots, \s_{n-1} \mid (\s_i \cdots \s_j)^2
		= \eps \tx{ for $i < j$} \rangle. \]
		Therefore there are normal subgroups $M$ and $K$ of $W^+$ such that $\GP = W^+/M$ and $\GQ = W^+/K$. Then
		$\calP$ covers $\calQ$ if and only if $M \leq K$.
	 
		Let $\calP$ be a chiral or directly regular polytope with $\GP = W^+/M$. We define 
		\[ \ch{M} = \{\ch{w} \mid w \in M\}. \]
		If $\ch{M} = M$, then $\calP$ is directly regular. Otherwise, $\calP$ is chiral, and $\GcP = W^+/\ch{M}$.

\section{Mixing polytopes}

	In this section, we will define the mix of two finitely presented groups, which naturally
	gives rise to a way to mix polytopes. The mixing operation is analogous to the join of hypermaps 
	\cite{ant2} and the parallel product of maps \cite{wilson}.
	
	Let $\G = \langle x_1, \ldots, x_n \rangle$ and $\G' =
	\langle x_1', \ldots, x_n' \rangle$ be groups with $n$ specified generators. Then the elements
	$z_i = (x_i, x_i') \in \G \times \G'$ (for $i = 1, \ldots, n$) generate a subgroup of
	$\G \times \G'$ that we call the \emph{mix} of $\G$ and $\G'$ and denote $\G \mix \G'$
	(see \cite[Ch.7A]{arp}).

	If $\calP$ and $\calQ$ are chiral or directly regular $n$-polytopes, we can mix their rotation groups.
	Let $\GP = \langle \s_1, \ldots, \s_{n-1} \rangle$ and $\GQ = \langle
	\s_1', \ldots, \s_{n-1}' \rangle$. Let $\beta_i = (\s_i, \s_i')$ for $i = 1, \ldots, n-1$.
	Then $\GP \mix \GQ = \langle \beta_1, \ldots, \beta_{n-1} \rangle$. We note that for $i < j$, we have
	$(\beta_i \cdots \beta_j)^2 = \eps$, so that the group $\GP \mix \GQ$ can be written as a quotient of $W^+$.
	In general, however, it will not have the intersection property (\eref{eq:chiral-int}) with respect to its 
	generators $\beta_1, \ldots, \beta_{n-1}$. Nevertheless, it is possible to build a directly regular or
	chiral poset from
	$\GP \mix \GQ$ using the method outlined in \cite{SW1}, and we denote that poset $\calP \mix \calQ$
	and call it the \emph{mix} of $\calP$ and $\calQ$. (In fact, this poset is always a flag-connected
	pre-polytope.) Thus $\GPQ = \GP \mix \GQ$. If $\GP \mix \GQ$ satisfies
	the intersection property, then $\calP \mix \calQ$ is in fact a polytope.
	
	The following proposition is proved in \cite{const}:

	\begin{proposition}
	\label{prop:mix}
	Let $\calP$ and $\calQ$ be chiral or directly regular polytopes with $\GP = W^+/M$ and
	$\GQ = W^+/K$. Then $\GPQ \simeq W^+/(M \cap K)$.
	\end{proposition}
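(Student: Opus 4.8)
The plan is to exhibit $\GPQ$ as the image of a single homomorphism out of $W^+$ and then read off its kernel. Since $\GP$ and $\GQ$ are both quotients of $W^+$, I would begin by writing $\pi \colon W^+ \to \GP$ and $\pi' \colon W^+ \to \GQ$ for the canonical projections, so that $\ker \pi = M$ and $\ker \pi' = K$. These are exactly the maps furnished by the hypotheses $\GP = W^+/M$ and $\GQ = W^+/K$.

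Next I would assemble the diagonal homomorphism $\phi \colon W^+ \to \GP \times \GQ$ given by $\phi(w) = (\pi(w), \pi'(w))$. Because $\phi$ is a homomorphism and $W^+ = \langle \s_1, \ldots, \s_{n-1} \rangle$, its image is generated by the elements $\phi(\s_i) = (\pi(\s_i), \pi'(\s_i)) = (\s_i, \s_i') = \beta_i$. By the very definition of the mix as the subgroup of $\GP \times \GQ$ generated by the $\beta_i$, this image is precisely $\GP \mix \GQ = \GPQ$. Hence $\phi$ is a surjection $W^+ \twoheadrightarrow \GPQ$.

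The remaining step is to compute $\ker \phi$. An element $w \in W^+$ lies in $\ker \phi$ exactly when $\pi(w) = \eps$ and $\pi'(w) = \eps$, that is, when $w \in M$ and $w \in K$ simultaneously; thus $\ker \phi = M \cap K$. Applying the first isomorphism theorem to $\phi$ then yields $\GPQ \simeq W^+/(M \cap K)$, as desired.

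I do not expect a genuine obstacle here: the argument is a direct application of the universal property of the product together with the first isomorphism theorem. The only point requiring a moment's care is the identification of $\mathrm{im}\,\phi$ with the mix, which is immediate once one notes that $\phi$ carries the generators $\s_i$ of $W^+$ to exactly the generating set $\{\beta_i\}$ of $\GP \mix \GQ$; everything else is bookkeeping about kernels.
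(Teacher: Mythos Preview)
Your argument is correct and is the standard one: the diagonal map $\phi\colon W^+\to\GP\times\GQ$ has image generated by the $\beta_i$, hence equals $\GP\mix\GQ$, and its kernel is visibly $M\cap K$, so the first isomorphism theorem finishes. Note, however, that the paper does not actually supply a proof of this proposition---it cites \cite{const} for it---so there is nothing in the present paper to compare your approach against; your write-up would serve perfectly well as the omitted proof.
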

	
	Determining the size of $\GP \mix \GQ$ is often difficult for a computer unless $\GP$ and $\GQ$ are
	both fairly small. However, there is usually an easy way to indirectly calculate the size
	using the \emph{comix} of two groups. If $\G$ has presentation $\gp{x_1, \ldots, x_n}{R}$ and $\G'$ has 
	presentation $\gp{x_1', \ldots, x_n'}{S}$,
	then we define the comix of $\G$ and $\G'$, denoted $\G \comix \G'$, to be the group with presentation
	\[ \gp{x_1, x_1', \ldots, x_n, x_n'}{R, S, x_1^{-1}x_1', \ldots, x_n^{-1}x_n'}.\]
	Informally speaking, we can just add the relations from $\G'$ to $\G$, rewriting them to use
	$x_i$ in place of $x_i'$. 

	Just as the mix of two rotation groups has a simple description in terms of quotients of $W^+$, so
	does the comix of two rotation groups:

	\begin{proposition}
	\label{prop:comix}
	Let $\calP$ and $\calQ$ be chiral or directly regular polytopes with $\GP = W^+/M$ and
	$\GQ = W^+/K$. Then $\GcPQ \simeq W^+/MK$.
	\end{proposition}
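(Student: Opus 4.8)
The plan is to reduce the statement to a routine manipulation of group presentations, the only real ingredient beyond formal bookkeeping being that $M$ and $K$ are normal subgroups of $W^+$. First I would fix presentations of the two rotation groups compatible with their descriptions as quotients of $W^+$. Write $W^+ = \gp{\s_1, \ldots, \s_{n-1}}{R_0}$, where $R_0$ is the set of relators $(\s_i \cdots \s_j)^2$ for $i < j$. Since $\GP = W^+/M$, choosing a set $R_M$ of words whose normal closure in $W^+$ is $M$ yields a presentation $\GP = \gp{\s_1, \ldots, \s_{n-1}}{R_0, R_M}$. Similarly $\GQ = \gp{\s_1', \ldots, \s_{n-1}'}{R_0', R_K}$, where $R_0'$ is $R_0$ rewritten in the primed generators and the normal closure of $R_K$ is $K$.

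Next I would assemble the comix directly from its definition, namely
\[ \GcPQ = \gp{\s_1, \s_1', \ldots, \s_{n-1}, \s_{n-1}'}{R_0, R_M, R_0', R_K, \s_1^{-1}\s_1', \ldots, \s_{n-1}^{-1}\s_{n-1}'}, \]
and then carry out the central step: eliminating the primed generators by a Tietze transformation. The relators $\s_i^{-1}\s_i'$ let me substitute $\s_i$ for every occurrence of $\s_i'$, deleting the generators $\s_i'$ and these relators. After the substitution $R_0'$ becomes identical to $R_0$, so the duplicated copy of the $W^+$-relations is redundant, and $R_K$ becomes a set of words in the unprimed generators whose normal closure in $W^+$ is still $K$. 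This leaves the presentation $\gp{\s_1, \ldots, \s_{n-1}}{R_0, R_M, R_K}$, i.e. $W^+$ modulo the normal closure of $R_M \cup R_K$.

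Finally I would identify that normal closure. It is by construction the smallest normal subgroup of $W^+$ containing both $M$ and $K$. Because $M$ and $K$ are each normal in $W^+$, the product $MK$ is a subgroup, is itself normal, and contains both $M$ and $K$; conversely any normal subgroup containing $M$ and $K$ contains $MK$. Hence the normal closure of $R_M \cup R_K$ is exactly $MK$, and the presentation above is that of $W^+/MK$, as claimed. The hard part is not conceptual but purely a matter of care in the Tietze step — verifying that the primed relators translate correctly and that the duplicated $W^+$-relations collapse — together with the observation, immediate from normality, that the join of $M$ and $K$ inside $W^+$ coincides with their product $MK$; everything else is formal.
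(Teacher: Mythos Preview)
Your proof is correct and follows essentially the same route as the paper: the paper jumps straight to writing $\GcPQ = \langle \s_1, \ldots, \s_{n-1} \mid R \cup S \rangle$ (invoking the informal remark that one simply rewrites the relations of $\GQ$ in the unprimed generators), whereas you make this step explicit via a Tietze transformation, and then both arguments identify the normal closure of the combined relator set with $MK$ by the same normality considerations.
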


	\begin{proof}
	Let $\GP = \langle \s_1, \ldots, \s_{n-1} \mid R \rangle$, and let $\GQ = \langle \s_1, \ldots, \s_{n-1} \mid
	S \rangle$, where $R$ and $S$ are sets of relators in $W^+$. 
	Then $M$ is the normal closure of $R$ in $W^+$ and $K$ is the normal closure of $S$ in $W^+$.
	We can write $\GcPQ = \langle \s_1, \ldots, \s_{n-1} \mid R \cup S \rangle$, so we want to show that $MK$ is
	the normal closure of $R \cup S$ in $W^+$. It's clear that $MK$ contains $R \cup S$, and since
	$M$ and $K$ are normal, $MK$ is normal, and so it contains the normal closure of $R \cup S$.
	To show that $MK$ is contained in the normal closure of $R \cup S$, it suffices to show that if
	$N$ is a normal subgroup of $W^+$ that contains $R \cup S$, then it must also contain $MK$. Clearly,
	such an $N$ must contain the normal closure $M$ of $R$ and the normal closure $K$ of $S$. Therefore,
	$N$ contains $MK$, as desired.
	\end{proof}

	Now we can determine how the size of $\GP \mix \GQ$ is related to the size of $\GP \comix \GQ$.
	
	\begin{proposition}
	\label{prop:SizeOfMix}
	Let $\calP$ and $\calQ$ be finite chiral or directly regular $n$-polytopes. Then 
	\[ |\GPQ| \cdot |\GcPQ| = |\GP| \cdot |\GQ|. \]
	\end{proposition}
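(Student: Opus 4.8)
The plan is to rephrase the claim as an identity among the indices of subgroups of $W^+$ and then deduce it from the standard isomorphism theorems. Following the hypotheses, write $\GP = W^+/M$ and $\GQ = W^+/K$, where $M$ and $K$ are normal subgroups of $W^+$. By \pref{prop:mix}, $\GPQ \simeq W^+/(M \cap K)$, and by \pref{prop:comix}, $\GcPQ \simeq W^+/MK$. Because $\calP$ and $\calQ$ are finite, the indices $[W^+ : M] = |\GP|$ and $[W^+ : K] = |\GQ|$ are finite; since $M \cap K \leq M \leq MK$, the indices $[W^+ : M \cap K] = |\GPQ|$ and $[W^+ : MK] = |\GcPQ|$ are finite as well. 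Hence the desired equation is equivalent to
\[ [W^+ : M \cap K]\cdot[W^+ : MK] = [W^+ : M]\cdot[W^+ : K]. \]

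To prove this, I would work along the chain $W^+ \geq MK \geq M \geq M \cap K$. Since $M$ and $K$ are normal in $W^+$, the product $MK$ is a subgroup, and the second isomorphism theorem yields $MK/K \simeq M/(M \cap K)$, so that $[M : M \cap K] = [MK : K]$. Multiplicativity of indices along the chains $W^+ \geq M \geq M \cap K$ and $W^+ \geq MK \geq K$ then gives
\[ [W^+ : M \cap K] = [W^+ : M]\cdot[M : M \cap K] = [W^+ : M]\cdot[MK : K], \]
together with $[W^+ : K] = [W^+ : MK]\cdot[MK : K]$. Eliminating the common factor $[MK : K]$ between these two relations produces exactly the index identity above, and the proposition follows.

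The one point that requires care — and the main obstacle to a naive argument — is that $W^+$ is in general infinite, so one cannot simply invoke the finite product formula $|MK| = |M|\,|K|/|M \cap K|$ on the orders of the subgroups themselves. Working with indices sidesteps this: the finiteness of $\calP$ and $\calQ$ guarantees that every index appearing above is finite, so both the multiplicativity of indices and the cancellation step are legitimate. The normality of $M$ and $K$ is likewise essential, since it is what makes $MK$ a subgroup and licenses the application of the second isomorphism theorem.
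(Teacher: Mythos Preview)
Your proof is correct and follows essentially the same route as the paper's. Both arguments identify $\GPQ$ and $\GcPQ$ with $W^+/(M\cap K)$ and $W^+/MK$ via \pref{prop:mix} and \pref{prop:comix}, and both hinge on the second isomorphism theorem $MK/K \simeq M/(M\cap K)$; the paper packages this as equality of the kernels of the natural maps $\GPQ \to \GP$ and $\GQ \to \GcPQ$, whereas you phrase it as an index identity, but the content is the same.
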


	\begin{proof}
	Let $\GP = W^+/M$ and $\GQ = W^+/K$. Then by \pref{prop:mix}, $\GPQ = W^+/(M \cap K)$, and 
	by \pref{prop:comix}, $\GcPQ = W^+/MK$.
	Let $\pi_1: \GPQ \to \GP$ and $\pi_2: \GQ \to \GcPQ$ be the natural covering maps. Then
	$\ker \pi_1 \simeq M/(M \cap K)$ and $\ker \pi_2 \simeq MK/K \simeq M/(M \cap K)$. Therefore,
	we have that 
	\[|\GPQ| = |\GP||\ker \pi_1| = |\GP||\ker \pi_2| = |\GP||\GQ|/|\GcPQ|,\]
	and the result follows.
	\end{proof}

	\begin{corollary}
	\label{cor:trivial-comix}
	Let $\calP$ and $\calQ$ be finite chiral or directly regular $n$-polytopes such that $\GP \comix \GQ$
	is trivial. Then $\GP \mix \GQ = \GP \times \GQ$.
	\end{corollary}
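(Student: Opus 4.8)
The plan is to exploit the fact that, by construction, $\GP \mix \GQ$ sits inside the direct product $\GP \times \GQ$ as the subgroup generated by the elements $\beta_i = (\s_i, \s_i')$. Since both groups are finite, it will suffice to show that $\GP \mix \GQ$ has the same order as $\GP \times \GQ$: a subgroup of a finite group whose order equals that of the ambient group must be the whole group. So the entire task reduces to computing $|\GP \mix \GQ|$ and comparing it with $|\GP| \cdot |\GQ|$.

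To carry this out, I would invoke \pref{prop:SizeOfMix}, which gives
\[ |\GPQ| \cdot |\GcPQ| = |\GP| \cdot |\GQ|. \]
The hypothesis that $\GP \comix \GQ$ is trivial means precisely that $|\GcPQ| = 1$, so this identity collapses to $|\GPQ| = |\GP| \cdot |\GQ|$. Recalling that $\GPQ = \GP \mix \GQ$ and that $|\GP \times \GQ| = |\GP| \cdot |\GQ|$, I conclude that the subgroup $\GP \mix \GQ$ and the ambient group $\GP \times \GQ$ have equal (finite) order, whence they coincide.

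I expect no serious obstacle here: the real content is carried entirely by \pref{prop:SizeOfMix} (which in turn rests on \pref{prop:mix} and \pref{prop:comix}), and the remainder is just the elementary observation about finite groups. The only points that require care are bookkeeping ones — namely, keeping straight that the mix is \emph{defined} as a subgroup of the direct product (so that the order comparison is meaningful at all), and noticing that finiteness of $\calP$ and $\calQ$ is genuinely used, since for infinite groups a proper subgroup can share its cardinality with the whole group and the conclusion could fail.
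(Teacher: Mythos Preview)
Your proposal is correct and matches the paper's intent: the corollary is stated immediately after \pref{prop:SizeOfMix} with no proof, precisely because it follows in the way you describe---the mix is by definition a subgroup of $\GP \times \GQ$, and the order count from \pref{prop:SizeOfMix} forces equality in the finite case.
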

	
	The reason that \pref{prop:SizeOfMix} is so useful in calculating the size of $\GP \mix \GQ$ is that
	it is typically very easy for a computer to find the size of $\GP \comix \GQ$. Indeed, in many of the
	cases that come up in practice, it is easy to calculate $|\GP \comix \GQ|$ by hand just by combining
	the relations from $\GP$ and $\GQ$ and playing with the presentation a little. 
	
	The mix of $\calP$ and $\calQ$ is polytopal if and only if $\GP \mix \GQ$ satisfies the intersection
	property (\eref{eq:chiral-int}). There is no general method for determining whether this condition
	is met, but the following two results from \cite{const} are widely applicable.
		
	\begin{proposition}
	\label{prop:facets-cover}
	Let $\calP$ and $\calQ$ be chiral or directly regular $n$-polytopes. If the facets of $\calP$ cover
	the facets of $\calQ$, or if the vertex-figures of $\calP$ cover the vertex-figures of $\calQ$,
	then $\calP \mix \calQ$ is polytopal.
	\end{proposition}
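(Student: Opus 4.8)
The plan is to verify the intersection property (\eref{eq:chiral-int}) for the rotation group $\GPQ = \langle \beta_1, \ldots, \beta_{n-1} \rangle$, where $\beta_i = (\s_i, \s_i')$; by the discussion preceding \pref{prop:mix}, the mix $\calP \mix \calQ$ is polytopal precisely when this holds. My main tool would be the rotation-group analog of the \emph{quotient criterion} for string C-groups (\cite[Ch.~2E]{arp}): if $\G^+ = \langle \beta_1, \ldots, \beta_{n-1} \rangle$ is a quotient of $W^+$ whose facet subgroup $\langle \beta_1, \ldots, \beta_{n-2} \rangle$ already has the intersection property, and if there is an epimorphism $\pi$ onto the rotation group of some chiral or directly regular polytope with $\pi(\beta_i) = \s_i$ whose restriction to that facet subgroup is injective, then $\G^+$ itself has the intersection property. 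The version using the vertex-figure subgroup $\langle \beta_2, \ldots, \beta_{n-1} \rangle$ holds by the symmetric argument.

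Assume first that the facets of $\calP$ cover those of $\calQ$. The facets $\calP_1$ and $\calQ_1$ have rotation groups $\langle \s_1, \ldots, \s_{n-2} \rangle = \Gamma^+(\calP_1)$ and $\langle \s_1', \ldots, \s_{n-2}' \rangle = \Gamma^+(\calQ_1)$, so the facet subgroup $\langle \beta_1, \ldots, \beta_{n-2} \rangle$ of $\GPQ$ is exactly the mix $\Gamma^+(\calP_1) \mix \Gamma^+(\calQ_1)$. The covering $\calP_1 \to \calQ_1$ induces an epimorphism $\phi : \Gamma^+(\calP_1) \to \Gamma^+(\calQ_1)$ with $\phi(\s_i) = \s_i'$. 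Hence the mix, being generated by the elements $\beta_i = (\s_i, \phi(\s_i))$, is precisely the graph of $\phi$, and the first-coordinate projection restricts to an isomorphism of it onto $\Gamma^+(\calP_1)$. In particular the facet subgroup of $\GPQ$ has the intersection property, since it is isomorphic to the rotation group of the polytope $\calP_1$.

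Now I would take $\pi : \GPQ \to \GP$ to be the restriction of the projection $\GP \times \GQ \to \GP$; this is an epimorphism with $\pi(\beta_i) = \s_i$, and $\GP$ has the intersection property because $\calP$ is a polytope. By the previous paragraph $\pi$ acts as the first-coordinate projection on the facet subgroup, hence is injective there. The quotient criterion then gives the intersection property for $\GPQ$, so $\calP \mix \calQ$ is polytopal. If instead the vertex-figures of $\calP$ cover those of $\calQ$, I would run the identical argument with the vertex-figure subgroup $\langle \beta_2, \ldots, \beta_{n-1} \rangle$ and the vertex-figure form of the quotient criterion.

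The main obstacle is establishing the rotation-group quotient criterion itself, the analog of the string C-group result. The regular-polytope proof proceeds by induction on the rank together with a careful chase through the subgroup lattice; transferring it to the rotation-group setting requires re-deriving the corresponding lattice identities in the $\tau_{i,j}$ notation of (\eref{eq:chiral-int}), which is the delicate point. Once that criterion is available, the covering hypothesis does the remaining work essentially for free: it forces the relevant corank-one subgroup of the mix to be a graph, and therefore to be mapped isomorphically by $\pi$.
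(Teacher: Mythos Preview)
The paper does not actually prove \pref{prop:facets-cover}; it merely cites it from \cite{const}. So there is no in-paper argument to compare against. That said, your approach is correct and is essentially the standard one: the projection $\pi:\GPQ\to\GP$ is a surjection onto a group with the intersection property, and the covering hypothesis on facets makes the mix of the facet groups equal to the graph of an epimorphism $\G^+(\calP_1)\to\G^+(\calQ_1)$, so $\pi$ is injective on $\langle\beta_1,\ldots,\beta_{n-2}\rangle$. The rotation-group quotient criterion then finishes the job.

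Your caveat about that criterion is well placed but not a serious obstacle. The rotation-group analog of \cite[2E17]{arp} is established in \cite{const} and \cite{SW1}: one shows by induction, using the $\tau_{i,j}$ generators, that injectivity of $\pi$ on the facet subgroup together with the intersection property in the target forces $\G^+_I\cap\G^+_J=\G^+_{I\cap J}$ for all admissible $I,J$. The bookkeeping is a little heavier than in the reflection-group case because the $\tau_{i,j}$ are indexed by intervals rather than single indices, but no new idea is required. Once you grant that lemma, your proof is complete and matches the intended argument.
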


	\begin{proposition}
	\label{prop:rel-prime-type}
	Let $\calP$ be a chiral or directly regular $n$-polytope of type $\{p_1, \ldots, p_{n-1}\}$, and let
	$\calQ$ be a chiral or directly regular $n$-polytope of type $\{q_1, \ldots, q_{n-1}\}$.
	If $p_i$ and $q_i$ are relatively prime for each $i = 1, \ldots, n-1$, then $\calP \mix \calQ$ is a 
	chiral or directly regular $n$-polytope of type $\{p_1 q_1, \ldots, p_{n-1} q_{n-1}\}$, and $\GPQ = 
	\GP \times \GQ$.
	\end{proposition}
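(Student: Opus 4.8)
The plan is to prove the statement in three stages: that the mixed rotation group is the full direct product, that the generators have the claimed orders, and that the intersection property holds, so that the mix is genuinely a polytope. For the first stage I would show $\GPQ = \GP \times \GQ$ by an elementary coprimality argument. Since $\gcd(p_i, q_i) = 1$, choose integers $a_i, b_i$ with $a_i p_i + b_i q_i = 1$. Writing $\beta_i = (\s_i, \s_i')$ and using $\s_i^{p_i} = (\s_i')^{q_i} = \eps$, one checks at once that $\beta_i^{b_i q_i} = (\s_i, \eps)$ and $\beta_i^{a_i p_i} = (\eps, \s_i')$. Thus the cyclic group $\langle \beta_i \rangle$ already contains both $(\s_i, \eps)$ and $(\eps, \s_i')$, so $\GPQ = \langle \beta_1, \ldots, \beta_{n-1} \rangle$ contains $\GP \times \{\eps\}$ and $\{\eps\} \times \GQ$ and hence equals $\GP \times \GQ$. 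The same computation yields the sharper fact I will need: for every $S \subseteq \{1, \ldots, n-1\}$,
\[ \langle \beta_i : i \in S \rangle = \langle \s_i : i \in S \rangle \times \langle \s_i' : i \in S \rangle . \]
The type is then immediate, since in a direct product the order of $\beta_i = (\s_i, \s_i')$ is $\operatorname{lcm}(p_i, q_i) = p_i q_i$.

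The substance of the proof is to verify the intersection property \eref{eq:chiral-int} for $\GPQ$ relative to $\beta_1, \ldots, \beta_{n-1}$; this is exactly what promotes the pre-polytope $\calP \mix \calQ$ to a polytope, and the result recalled in Section 2 then makes it automatically chiral or directly regular. I would argue by induction on the rank, using the inductive reduction for the intersection property of rotation groups (the analogue of the string C-group criterion): writing $I_F = \{-1, 0, \ldots, n-2\}$, $I_V = \{1, 2, \ldots, n\}$ and $I_M = I_F \cap I_V = \{1, \ldots, n-2\}$, it suffices to check that $\G^+_{I_F} = \langle \beta_1, \ldots, \beta_{n-2} \rangle$ and $\G^+_{I_V} = \langle \beta_2, \ldots, \beta_{n-1} \rangle$ each satisfy the intersection property, and that $\G^+_{I_F} \cap \G^+_{I_V} = \G^+_{I_M}$. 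The first two subgroups are the rotation groups of the mix of the facets, respectively the vertex-figures, of $\calP$ and $\calQ$; their types $\{p_1, \ldots, p_{n-2}\}, \{q_1, \ldots, q_{n-2}\}$ and $\{p_2, \ldots, p_{n-1}\}, \{q_2, \ldots, q_{n-1}\}$ remain coprime componentwise, so they satisfy the intersection property by the inductive hypothesis.

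The last condition falls out of the splitting from the first stage. By that splitting, $\G^+_{I_F} = \GP_{I_F} \times \GQ_{I_F}$, $\G^+_{I_V} = \GP_{I_V} \times \GQ_{I_V}$ and $\G^+_{I_M} = \GP_{I_M} \times \GQ_{I_M}$. Since intersection distributes over direct factors,
\[ \G^+_{I_F} \cap \G^+_{I_V} = (\GP_{I_F} \cap \GP_{I_V}) \times (\GQ_{I_F} \cap \GQ_{I_V}) . \]
But $\calP$ and $\calQ$ are themselves polytopes, so their rotation groups satisfy \eref{eq:chiral-int}; in particular $\GP_{I_F} \cap \GP_{I_V} = \GP_{I_M}$ and $\GQ_{I_F} \cap \GQ_{I_V} = \GQ_{I_M}$. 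Hence $\G^+_{I_F} \cap \G^+_{I_V} = \GP_{I_M} \times \GQ_{I_M} = \G^+_{I_M}$, which is precisely the required identity, and the induction closes (the base case $n = 2$ being trivial). The main obstacle is this last stage: one must confirm that the reduction criterion is available in the stated form for rotation groups, and that the facet and vertex-figure sections of the mix really are the mixes of the corresponding sections — with the usual care about the change of generators $\s_1 \mapsto \s_1^{-1}$, $\s_2 \mapsto \s_1^2 \s_2$ needed to present a vertex-figure's rotation group — after which the product splitting reduces everything to the intersection property already known in $\GP$ and $\GQ$.
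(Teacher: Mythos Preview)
Your argument is correct and follows essentially the same route as the paper. Note that the paper does not actually prove \pref{prop:rel-prime-type} itself---it is quoted from \cite{const}---but the paper does prove the stronger \tref{thm:rel-prime-type}, and that proof proceeds exactly as your polytopality argument does: induction on rank, the reduction criterion from \cite{SW1} to the single intersection $\langle \beta_1,\ldots,\beta_{n-2}\rangle \cap \langle \beta_2,\ldots,\beta_{n-1}\rangle$, passage to the ambient direct products, and then the intersection property of $\GP$ and $\GQ$ together with the coprimality of the middle entries to identify $\langle \beta_2,\ldots,\beta_{n-2}\rangle$ with the full product $\langle \s_2,\ldots,\s_{n-2}\rangle \times \langle \s_2',\ldots,\s_{n-2}'\rangle$. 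Your B\'ezout computation showing $\langle\beta_i\rangle$ already contains $(\s_i,\eps)$ and $(\eps,\s_i')$ is exactly the mechanism behind this last identification and also gives the extra content of \pref{prop:rel-prime-type} (the direct-product decomposition and the type $\{p_1q_1,\ldots,p_{n-1}q_{n-1}\}$) that \tref{thm:rel-prime-type} does not assert.
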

	
	In fact, it is actually sufficient for only the middle entries of the Schl\"afli symbol to be coprime:

	\begin{theorem}
	\label{thm:rel-prime-type}
	Let $\calP$ be a chiral or directly regular $n$-polytope of type $\{p_1, \ldots, p_{n-1}\}$, and let
	$\calQ$ be a chiral or directly regular $n$-polytope of type $\{q_1, \ldots, q_{n-1}\}$.
	If $p_i$ and $q_i$ are relatively prime for each $i = 2, \ldots, n-2$ (but not neccessarily for
	$i=1$ or $i=n-1$), then $\calP \mix \calQ$ is a chiral or directly regular $n$-polytope.
	\end{theorem}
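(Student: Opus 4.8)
The plan is to induct on the rank $n$ and verify the intersection property (\eref{eq:chiral-int}) for $\GPQ = \langle \beta_1, \ldots, \beta_{n-1}\rangle$ by means of the standard quotient criterion for rotation groups (the rotation-group analogue of the corresponding result for string C-groups). That criterion reduces the full intersection property to three things: that the two maximal subgroups $\langle \beta_1, \ldots, \beta_{n-2}\rangle$ and $\langle \beta_2, \ldots, \beta_{n-1}\rangle$ each satisfy \eref{eq:chiral-int}, and that
\[ \langle \beta_1, \ldots, \beta_{n-2}\rangle \cap \langle \beta_2, \ldots, \beta_{n-1}\rangle = \langle \beta_2, \ldots, \beta_{n-2}\rangle. \]
Since $\beta_i = (\s_i, \s_i')$, these two maximal subgroups are precisely the mix of the facets of $\calP$ and $\calQ$ and the mix of their vertex-figures, respectively.

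For the base case (ranks $n \leq 3$) polytopality is immediate: in rank $3$ the only content of \eref{eq:chiral-int} is that $\langle \beta_1\rangle \cap \langle \beta_2\rangle = \{\eps\}$, and writing an element of this intersection as $\beta_1^a = \beta_2^b$ and projecting onto each factor, the intersection property of the genuinely polytopal $\calP$ and $\calQ$ forces $\s_1^a = \eps$ and $(\s_1')^a = \eps$, so the element is trivial. For the inductive step, the facets of $\calP$ and $\calQ$ are chiral or directly regular $(n-1)$-polytopes of types $\{p_1, \ldots, p_{n-2}\}$ and $\{q_1, \ldots, q_{n-2}\}$, and their vertex-figures have types $\{p_2, \ldots, p_{n-1}\}$ and $\{q_2, \ldots, q_{n-1}\}$. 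In every case the middle-entry coprimalities demanded by the induction hypothesis form a subset of the coprimalities for $i = 2, \ldots, n-2$ assumed here, so both maximal subgroups $\langle \beta_1, \ldots, \beta_{n-2}\rangle$ and $\langle \beta_2, \ldots, \beta_{n-1}\rangle$ are polytopal by induction.

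The main obstacle is the displayed intersection condition, and this is exactly where the coprimality of the middle entries is used. Let $g$ lie in the left-hand side and write $g = (g_P, g_Q) \in \GP \times \GQ$. Membership in the first maximal subgroup puts $g_P \in \langle \s_1, \ldots, \s_{n-2}\rangle$ and $g_Q \in \langle \s_1', \ldots, \s_{n-2}'\rangle$, while membership in the second puts $g_P \in \langle \s_2, \ldots, \s_{n-1}\rangle$ and $g_Q \in \langle \s_2', \ldots, \s_{n-1}'\rangle$. Applying the intersection property of the polytope $\calP$ to $g_P$ (and of $\calQ$ to $g_Q$) yields $g_P \in \langle \s_2, \ldots, \s_{n-2}\rangle$ and $g_Q \in \langle \s_2', \ldots, \s_{n-2}'\rangle$, so $g$ lies in the direct product of these two sections. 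It then remains to identify $\langle \beta_2, \ldots, \beta_{n-2}\rangle$ with that full direct product. By definition this subgroup is the mix of the $(n-2)$-polytopes $\langle \s_2, \ldots, \s_{n-2}\rangle$ and $\langle \s_2', \ldots, \s_{n-2}'\rangle$, which are chiral or directly regular of types $\{p_2, \ldots, p_{n-2}\}$ and $\{q_2, \ldots, q_{n-2}\}$; every entry of these Schl\"afli symbols is coprime by hypothesis, so \pref{prop:rel-prime-type} applies and gives that this mix equals the direct product. Hence $g \in \langle \beta_2, \ldots, \beta_{n-2}\rangle$, completing the verification of the criterion and therefore the induction.
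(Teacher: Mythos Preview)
Your proof is correct and follows essentially the same approach as the paper: induct on the rank, use the inductive hypothesis on the facet and vertex-figure mixes to get the intersection property on the two maximal subgroups, then verify $\langle \beta_1,\ldots,\beta_{n-2}\rangle \cap \langle \beta_2,\ldots,\beta_{n-1}\rangle \leq \langle \beta_2,\ldots,\beta_{n-2}\rangle$ by projecting into $\GP$ and $\GQ$, applying their intersection properties, and invoking \pref{prop:rel-prime-type} to identify the medial mix with the full direct product. The only cosmetic differences are that you take the base case up through $n\leq 3$ (arguing rank~$3$ explicitly) and phrase the key intersection step element-wise, whereas the paper starts the induction at $n\leq 2$ and writes the same argument as a chain of subgroup containments.
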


	\begin{proof}
	We prove the claim by induction. The claim is trivially true for $n \leq 2$. Now, suppose the claim is true 
	for $(n-1)$-polytopes, and let $\calP$ and $\calQ$ be $n$-polytopes satisfying the given conditions. 
	Let $\GP = \langle \s_1, \ldots, \s_{n-1} \rangle$ and $\GQ = \langle \s_1', \ldots, \s_{n-1}' \rangle$.
	Let $\beta_i = (\s_i, \s_i')$, so that $\GP \mix \GQ = \langle \beta_1, \ldots, \beta_{n-1} \rangle$.
	Now, the facets of $\calP$ are of type $\{p_1, \ldots, p_{n-2}\}$ and the facets of $\calQ$ are of type
	$\{q_1, \ldots, q_{n-2}\}$, so that by the inductive hypothesis, the mix of the facets is polytopal.
	In other words, $\langle \beta_1, \ldots, \beta_{n-2} \rangle$ has the intersection property
	(\eref{eq:chiral-int}). Similarly, $\langle \beta_2, \ldots, \beta_{n-1} \rangle$ has the intersection
	property. Now, if we can prove that $\langle \beta_1, \ldots, \beta_{n-2} \rangle \cap \langle
	\beta_2, \ldots, \beta_{n-1} \rangle \leq \langle \beta_2, \ldots, \beta_{n-2} \rangle$, then it
	follows that $\calP \mix \calQ$ is polytopal \cite{SW1}. We have that
	\begin{align*}
	\langle \beta_1, \ldots, \beta_{n-2} \rangle & \cap \langle \beta_2, \ldots, \beta_{n-1} \rangle \\
	&= \langle (\s_1, \s_1'), \ldots, (\s_{n-2}, \s_{n-2}') \rangle \cap 
	\langle (\s_2, \s_2'), \ldots, (\s_{n-1}, \s_{n-1}') \rangle \\
	& \leq (\langle \s_1, \ldots, \s_{n-2} \rangle \times \langle \s_1', \ldots, \s_{n-2}' \rangle) \cap
	(\langle \s_2, \ldots, \s_{n-1} \rangle \times \langle \s_2', \ldots, \s_{n-1}' \rangle) \\
	&= (\langle \s_1, \ldots, \s_{n-2} \rangle \cap \langle \s_2, \ldots, \s_{n-1} \rangle) \times 
	(\langle \s_1', \ldots, \s_{n-2}' \rangle \cap \langle \s_2', \ldots, \s_{n-1}' \rangle) \\
	&= \langle \s_2, \ldots, \s_{n-2} \rangle \times \langle \s_2', \ldots, \s_{n-2}' \rangle,
	\end{align*}
	where the last line follows from the polytopality of $\calP$ and $\calQ$.
	Now, $\langle \s_2, \ldots, \s_{n-2} \rangle$ is the group of a polytope of type $\{p_2, \ldots, p_{n-2}\}$,
	and $\langle \s_2', \ldots, \s_{n-2}' \rangle$ is the group of a polytope of type $\{q_2, \ldots, q_{n-2}\}$.
	Since $p_i$ and $q_i$ are coprime for $i=2, \ldots, n-2$, the mix of those two groups is their direct product.
	That is, 
	\[ \langle \beta_2, \ldots, \beta_{n-2} \rangle = \langle \s_2, \ldots, \s_{n-2} \rangle \times
	\langle \s_2', \ldots, \s_{n-2}' \rangle. \]
	Thus we see that $\calP \mix \calQ$ is polytopal.
	\end{proof}
	
	\begin{corollary}
	\label{cor:polyhedra}
	Let $\calP$ and $\calQ$ be chiral or directly regular polyhedra. Then $\calP \mix \calQ$ is a chiral
	or directly regular polyhedron.
	\end{corollary}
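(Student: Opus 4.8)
The plan is to obtain this statement as an immediate specialization of \tref{thm:rel-prime-type}. A polyhedron is precisely a polytope of rank $3$, so here $n = 3$ and each of $\calP$ and $\calQ$ has a Schl\"afli type with exactly two entries, say $\{p_1, p_2\}$ and $\{q_1, q_2\}$. The hypothesis of \tref{thm:rel-prime-type} requires only that $p_i$ and $q_i$ be relatively prime for the \emph{middle} indices $i = 2, \ldots, n-2$. For $n = 3$ this index range is $i = 2, \ldots, 1$, which is empty, so there is no coprimality condition to verify at all. Thus the theorem applies to \emph{every} pair of polyhedra, with no restriction on their types, and its conclusion is exactly that $\calP \mix \calQ$ is a chiral or directly regular $3$-polytope, i.e.\ a polyhedron.

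To see that this is not vacuous reasoning but a genuine reuse of the theorem's content, it is worth tracing what the proof of \tref{thm:rel-prime-type} checks when $n = 3$. Writing $\GP = \langle \s_1, \s_2 \rangle$, $\GQ = \langle \s_1', \s_2' \rangle$, and $\beta_i = (\s_i, \s_i')$, the single intersection condition that must be established is $\langle \beta_1 \rangle \cap \langle \beta_2 \rangle \leq \langle \beta_2, \ldots, \beta_{n-2} \rangle$, and for $n = 3$ the right-hand side is the trivial group. This follows from the containment $\langle \beta_1 \rangle \cap \langle \beta_2 \rangle \leq (\langle \s_1 \rangle \cap \langle \s_2 \rangle) \times (\langle \s_1' \rangle \cap \langle \s_2' \rangle)$, together with the intersection property (\eref{eq:chiral-int}) for $\calP$ and for $\calQ$ separately, each of which forces the corresponding factor to be trivial. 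No coprimality of $p_1, q_1$ or of $p_2, q_2$ is needed, because here the polygonal facets and vertex-figures of the mix impose no obstruction.

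I expect no real obstacle in this argument: the corollary is essentially a packaging of \tref{thm:rel-prime-type} for the rank-$3$ case, and all of the structural work has already been done there. The only points worth confirming are that the theorem's statement genuinely permits the middle index range to be empty, and that the base case $n \leq 2$ invoked in its inductive proof is just the trivial observation that the mix of two polygons is polytopal. Once these are noted, the result follows with no further computation.
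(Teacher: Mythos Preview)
Your proposal is correct and matches the paper's approach exactly: the corollary is stated immediately after \tref{thm:rel-prime-type} with no separate proof, precisely because the middle index range $i = 2, \ldots, n-2$ is empty when $n = 3$, making the coprimality hypothesis vacuous. Your additional unpacking of the $n=3$ case of the inductive argument is accurate and a helpful sanity check, though not strictly needed.
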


\section{Regular covers and the chirality group}

	The chirality group of a hypermap was introduced in \cite{bjns} and then adapted to polytopes in \cite{const}.
	Instead of a binary invariant, we now get a much more detailed measure of how far away a polytope
	is from being directly regular. Additionally, we can use the chirality group to easily determine when
	the mix of two polytopes is chiral, thereby dodging lengthy calculations.
	
	Let $\calP$ be a chiral polytope, and $\ch{\calP}$ its enantiomorphic form (mirror image). If $\GP$ has presentation
	\[ \gp{\s_1, \ldots, \s_{n-1}}{w_1, \ldots, w_t},\]
	then the group $\GcP$ has presentation
	\[ \gp{\s_1, \ldots, \s_{n-1}}{\ch{w_1}, \ldots, \ch{w_t}},\]
	where we obtain $\ch{w}$ from $w$ by changing every $\s_1$ to $\s_1^{-1}$ and every $\s_2$ to $\s_1^2 \s_2$. 
	Let $\GP = W^+/M$, so that $\GcP = W^+/\ch{M}$.
	
	Now, the group $\GP \mix \GcP$ is isomorphic to $W^+/(M \cap \ch{M})$. Set $N = M \cap \ch{M}$. Then
	\[ \ch{N} = \ch{M} \cap \ch{\ch{M}} = \ch{M} \cap M = N, \]
	so that $\GP \mix \GcP$ is the group of a directly regular pre-polytope. Furthermore, any directly
	regular pre-polytope that covers $\calP$ must cover $\calP \mix \ch{\calP}$, so $\calP \mix \ch{\calP}$
	is the minimal directly regular pre-polytope that covers $\calP$. In a similar way, we see that $\calP
	\comix \ch{\calP}$ is the maximal directly regular quotient of $\calP$.

	In order to determine how chiral $\calP$ is, we compare $\GP$ to $\GP \mix \GcP$ or to $\GP \comix \GcP$.
	From \pref{prop:SizeOfMix}, we know that the natural maps $\pi_1: \GP \mix \GcP \to \GcP$ and $\pi_2: 
	\GP \to \GP \comix \GcP$ have isomorphic kernels. We call this kernel the \emph{chirality group} of
	$\calP$ and denote it by $X(\calP)$. At one extreme, $X(\calP)$ might be trivial, in which case
	$\calP$ is directly regular. At the other extreme, $X(\calP)$ might coincide with the whole
	automorphism group $\GP$; in that case, we say that $\calP$ is \emph{totally chiral}.

	Now we move on to our main application of the chirality group. Our goal is to construct chiral polytopes
	via mixing. If $\calP$ is a chiral polytope and $\calQ$ is a chiral or directly regular polytope, how
	do we know when $\calP \mix \calQ$ is chiral? In principle, we can determine the answer with a computer
	algebra system. However, the usual algorithmic hurdles in combinatorial group theory often make it
	difficult to get an answer that way. 

	We start by giving an equivalent condition to direct regularity of $\calP \mix \calQ$.
	
	\begin{lemma}
	\label{lem:regular-mix-crit}
	Let $\calP$ and $\calQ$ be chiral or directly regular $n$-polytopes. Then
	$\calP \mix \calQ$ is directly regular if and only if $\GP \mix \GQ$ naturally covers $\GP \mix \GcP$
	and $\GQ \mix \GcQ$.
	\end{lemma}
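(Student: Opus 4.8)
The plan is to translate both sides of the biconditional into statements about the normal subgroups $M, K$ of $W^+$ for which $\GP = W^+/M$ and $\GQ = W^+/K$, and then to check that these statements are literally the same. Writing $N = M \cap K$, \pref{prop:mix} gives $\GPQ = W^+/N$, while the computations at the end of Section~2 give $\GP \mix \GcP = W^+/(M \cap \ch{M})$ and $\GQ \mix \GcQ = W^+/(K \cap \ch{K})$. First I would record the two facts that drive the whole argument: the mirror operation $\ch{\,\cdot\,}$ is an involutory automorphism of $W^+$ (since $\ch{\ch{w}} = w$ for every word $w$), so it preserves inclusions and intersections of normal subgroups, and in particular $\ch{N} = \ch{M} \cap \ch{K}$; and, by the direct-regularity criterion for (pre-)polytopes, $\calP \mix \calQ$ is directly regular precisely when $\ch{N} = N$.

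Next I would unpack the covering hypothesis using the criterion that $W^+/A$ naturally covers $W^+/B$ if and only if $A \leq B$. Thus $\GPQ$ naturally covers $\GP \mix \GcP$ iff $N \leq M \cap \ch{M}$; since $N \leq M$ holds automatically, this is equivalent to $N \leq \ch{M}$. Likewise $\GPQ$ naturally covers $\GQ \mix \GcQ$ iff $N \leq \ch{K}$. Taking the two together, the assumption that $\GPQ$ covers \emph{both} groups is equivalent to the single containment $N \leq \ch{M} \cap \ch{K} = \ch{N}$.

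It then remains to show that $N \leq \ch{N}$ is equivalent to $N = \ch{N}$. One direction is immediate. For the other, applying the involutory automorphism $\ch{\,\cdot\,}$ to the inclusion $N \leq \ch{N}$ yields $\ch{N} \leq \ch{\ch{N}} = N$, whence $N = \ch{N}$. Combined with the first paragraph, this shows that $\GPQ$ naturally covers both $\GP \mix \GcP$ and $\GQ \mix \GcQ$ if and only if $N = \ch{N}$, i.e.\ if and only if $\calP \mix \calQ$ is directly regular.

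I expect the only genuinely delicate point to be the claim that $\ch{\,\cdot\,}$ distributes over intersection, $\ch{M \cap K} = \ch{M} \cap \ch{K}$; this is exactly where the involutivity of the mirror map is needed, and I would justify it by observing that an automorphism of $W^+$ carries the intersection of two subgroups onto the intersection of their images. Everything else is a routine chain of equivalences once the problem is rephrased entirely in terms of the subgroups $M$, $K$, and $N$.
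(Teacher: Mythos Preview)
Your argument is correct and follows essentially the same route as the paper: translate both sides into inclusions among $M$, $K$, $\ch{M}$, $\ch{K}$ via \pref{prop:mix} and the covering criterion, reduce the two covering conditions to $M\cap K \leq \ch{M}\cap\ch{K}$, and then use the involutivity of $\ch{\,\cdot\,}$ to upgrade this to equality. The only difference is presentational---you package the two implications as a single chain of equivalences and spell out the involutivity step that the paper leaves implicit in its final line.
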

	
	\begin{proof}
	Let $M$ and $K$ be the subgroups of $W^+$ such that $\GP = W^+/M$ and $\GQ = W^+/K$.
	Then $\GP \mix \GQ = W^+/(M \cap K)$. If $\calP \mix \calQ$ is directly regular, then we have that
	$M \cap K = \ch{M} \cap \ch{K}$. Then $M \cap K = M \cap K \cap \ch{M} \leq M \cap \ch{M}$, so
	$\GP \mix \GQ$ naturally covers $W^+/(M \cap \ch{M}) = \GP \mix \GcP$. Similarly, $\GP \mix \GQ$
	naturally covers $\GQ \mix \GcQ$. 
	
	Conversely, if $\GP \mix \GQ$ naturally covers both $\GP \mix \GcP$ and $\GQ \mix \GcQ$, then we
	have that $M \cap K \leq M \cap \ch{M}$ and $M \cap K \leq K \cap \ch{K}$. Therefore $M \cap K
	\leq M \cap \ch{M} \cap K \cap \ch{K}$, and thus we must have $M \cap K = \ch{M} \cap \ch{K}$,
	so that $\calP \mix \calQ$ is directly regular.
	\end{proof}

	We now come to the main result:
	
	\begin{theorem}
	\label{thm:chiral-mix-criterion}
	Let $\calP$ and $\calQ$ be finite chiral or directly regular polytopes, but not both directly regular. 
	Suppose that $|X(\calP)|$ does not divide $|\GQ|$ or that $|X(\calQ)|$ does not divide $|\GP|$. Then 
	$\calP \mix \calQ$ is chiral.
	\end{theorem}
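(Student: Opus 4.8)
The plan is to argue by contradiction, reducing everything to a divisibility count via \lref{lem:regular-mix-crit}. The key preliminary observation is that $\calP \mix \calQ$ is always a flag-connected pre-polytope whose rotation group $\GPQ = \GP \mix \GQ$ is a quotient of $W^+$, so it is \emph{either} chiral \emph{or} directly regular; hence it suffices to rule out direct regularity. (The hypothesis that $\calP$ and $\calQ$ are not both directly regular is in fact forced by the divisibility assumption, since otherwise both chirality groups would be trivial and each would divide everything, so the stated ``does not divide'' condition could never hold.)

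So suppose, toward a contradiction, that $\calP \mix \calQ$ is directly regular. Then by \lref{lem:regular-mix-crit} the group $\GPQ$ naturally covers both $\GP \mix \GcP$ and $\GQ \mix \GcQ$. Since all groups here are finite, a covering is a surjective homomorphism, so $|\GP \mix \GcP|$ divides $|\GPQ|$ and $|\GQ \mix \GcQ|$ divides $|\GPQ|$. Next I would rewrite these orders using the chirality group: by definition $X(\calP)$ is the kernel of the natural surjection $\GP \mix \GcP \to \GcP$, and since $\calP \simeq \ch{\calP}$ we have $|\GcP| = |\GP|$, whence $|\GP \mix \GcP| = |X(\calP)| \cdot |\GP|$, and symmetrically $|\GQ \mix \GcQ| = |X(\calQ)| \cdot |\GQ|$. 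On the other hand $\GPQ$ is by construction a subgroup of $\GP \times \GQ$, so $|\GPQ|$ divides $|\GP| \cdot |\GQ|$ by Lagrange. Combining the two chains gives that $|X(\calP)| \cdot |\GP|$ divides $|\GP| \cdot |\GQ|$; cancelling the common factor $|\GP|$ yields that $|X(\calP)|$ divides $|\GQ|$, and symmetrically $|X(\calQ)|$ divides $|\GP|$. This contradicts the assumption that at least one of these divisibilities fails, so $\calP \mix \calQ$ is not directly regular, and therefore chiral.

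Once \lref{lem:regular-mix-crit} is granted, the argument is essentially bookkeeping, and I do not expect a serious obstacle. The only points needing care are: the identity $|\GcP| = |\GP|$ (valid because the enantiomorphism is an automorphism of $W^+$, so $\ch{M}$ and $M$ have the same index); the translation of the word ``covers'' into the divisibility of group orders; and the final appeal to the dichotomy that a flag-connected pre-polytope of this form is chiral exactly when it is not directly regular. The one genuinely conceptual step is recognizing that the covering relations handed to us by \lref{lem:regular-mix-crit} convert directly into the arithmetic constraints $|X(\calP)| \mid |\GQ|$ and $|X(\calQ)| \mid |\GP|$, which are precisely the quantities negated in the hypothesis.
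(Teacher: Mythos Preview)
Your proof is correct and follows essentially the same approach as the paper: contrapose via \lref{lem:regular-mix-crit}, convert the coverings of $\GP \mix \GcP$ and $\GQ \mix \GcQ$ into divisibility of orders, use $|\GP \mix \GcP| = |\GP|\,|X(\calP)|$ and $|\GPQ| \mid |\GP|\,|\GQ|$, and cancel. The extra justifications you supply (why $|\GcP|=|\GP|$, why the dichotomy chiral/directly regular applies) are all sound and simply make explicit what the paper leaves implicit.
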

	
	\begin{proof}
	Suppose that $\calP \mix \calQ$ is directly regular. Then by \lref{lem:regular-mix-crit}, $\GP \mix \GQ$ naturally covers $\GP \mix \GcP$ and $\GQ \mix \GcQ$. Thus we have that $|\GP \mix \GcP|$ and $|\GQ \mix \GcQ|$
	both divide $|\GP \mix \GQ|$, which divides $|\GP||\GQ|$. Since $|\GP \mix \GcP| = |\GP||X(\calP)|$ and
	$|\GQ \mix \GcQ| = |\GQ||X(\calQ)|$, we see that $|X(\calP)|$ divides $|\GQ|$ and $|X(\calQ)|$ divides
	$|\GP|$, and the result follows.
	\end{proof}
	
	\begin{corollary}
	Let $\calP$ and $\calQ$ be totally chiral polytopes. If $|\GP| \neq |\GQ|$, then $\calP \mix \calQ$ is chiral.
	\end{corollary}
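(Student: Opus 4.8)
The plan is to obtain this as an immediate consequence of \tref{thm:chiral-mix-criterion}, so the work lies entirely in checking that the hypotheses of that theorem are met for totally chiral $\calP$ and $\calQ$. First I would unwind the definition of total chirality: that $\calP$ is totally chiral means its chirality group $X(\calP)$ coincides with all of $\GP$, and likewise $X(\calQ) = \GQ$; in particular $|X(\calP)| = |\GP|$ and $|X(\calQ)| = |\GQ|$. I would also record that a totally chiral polytope is in particular chiral, since its chirality group $X(\calP) = \GP$ is nontrivial while direct regularity would force $X(\calP)$ to be trivial. Hence $\calP$ and $\calQ$ are both chiral and so certainly not both directly regular, as \tref{thm:chiral-mix-criterion} requires.

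The heart of the argument is then a one-line number-theoretic observation. Translating the divisibility hypothesis of \tref{thm:chiral-mix-criterion} through the equalities recorded above, the condition ``$|X(\calP)|$ does not divide $|\GQ|$ or $|X(\calQ)|$ does not divide $|\GP|$'' becomes ``$|\GP|$ does not divide $|\GQ|$ or $|\GQ|$ does not divide $|\GP|$.'' I would verify that this holds by contradiction: if both $|\GP| \mid |\GQ|$ and $|\GQ| \mid |\GP|$ held, then mutual divisibility of the two finite positive group orders would force $|\GP| = |\GQ|$, contradicting the standing hypothesis $|\GP| \neq |\GQ|$. Thus at least one of the two non-divisibility conditions must hold.

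With both hypotheses of \tref{thm:chiral-mix-criterion} now in hand --- the polytopes are not both directly regular, and one of the two orders fails to divide the other --- I would simply invoke that theorem to conclude that $\calP \mix \calQ$ is chiral. I do not anticipate any genuine obstacle here; the only step requiring any care is the bookkeeping that identifies $|X(\calP)|$ with $|\GP|$ (and $|X(\calQ)|$ with $|\GQ|$) directly from the definition of total chirality, after which the statement falls out as a routine corollary.
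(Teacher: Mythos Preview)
Your proposal is correct and is exactly the intended argument: the paper states this corollary immediately after \tref{thm:chiral-mix-criterion} without proof, and your unwinding of total chirality ($X(\calP)=\GP$, $X(\calQ)=\GQ$) together with the mutual-divisibility contradiction is precisely the routine verification the reader is meant to supply. The only minor point worth noting is that finiteness of $\calP$ and $\calQ$ is implicit (since the corollary inherits the standing hypotheses of \tref{thm:chiral-mix-criterion}), which you use when invoking mutual divisibility.
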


	\tref{thm:chiral-mix-criterion} gives us a simple combinatorial criterion for the chirality of $\calP
	\mix \calQ$. In order to use it, we only need information about $\calP$ and $\calQ$ separately; no other
	information about $\calP \mix \calQ$ is required. Furthermore, finding the size of $X(\calP)$ is
	usually a simple computation. We just need to calculate $|\GP| / |\GP \comix \GcP|$.
	
	Let us consider a simple example to illustrate. Let $\calQ$ be a finite chiral or directly regular, polyhedron, and let $\calP$ be the chiral polyhedron $\{4, 4\}_{(b, c)}$, where $p := b^2 + c^2$ is an odd
	prime. Suppose $p > |\GQ|$. Since the chirality group of $\calP$ is cyclic of order $p$ \cite{cox-index},	
	\tref{thm:chiral-mix-criterion} tells us that the polyhedron $\calP \mix \calQ$ is chiral.
	
	A small problem arises when we try to apply \tref{thm:chiral-mix-criterion} multiple times. In order
	to determine if $(\calP \mix \calQ) \mix \calR$ is chiral, we need to know the chirality group of
	$\calP \mix \calQ$. All that \tref{thm:chiral-mix-criterion} tells us is whether $X(\calP \mix \calQ)$
	is trivial. However, with a little extra work, we can get a lower bound on $X(\calP \mix \calQ)$.
	
	\begin{theorem}
	\label{thm:chirality-gp-size}
	Let $\calP$ and $\calQ$ be finite chiral or directly regular polytopes. Let $g_1$ be the greatest common
	divisor of $|X(\calP)|$ and $|\GQ|$, and let $g_2$ be the greatest common divisor of $|X(\calQ)|$ and
	$|\GP|$. Then $|X(\calP \mix \calQ)|$ is divisible by $|X(\calP)| / g_1$ and by $|X(\calQ)| / g_2$.
	\end{theorem}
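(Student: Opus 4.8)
The plan is to compare $X(\calP \mix \calQ)$ with $X(\calP)$ (and, symmetrically, with $X(\calQ)$) by exhibiting a natural homomorphism between them induced by the covering $\calP \mix \calQ \to \calP$, and then to control how far that homomorphism is from being onto.

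First I would write $\GP = W^+/M$ and $\GQ = W^+/K$, so that $\GPQ = W^+/N$ with $N = M \cap K$ by \pref{prop:mix}. Since the mirror operation $w \mapsto \ch{w}$ is an automorphism of $W^+$, the subgroups $\ch{M}$, $\ch{K}$ and $\ch{N} = \ch{M} \cap \ch{K}$ are again normal, so all the products below are subgroups. Unwinding the definition of the chirality group together with \pref{prop:comix}, the group $X(\calP)$ is the kernel of $\GP \to \GP \comix \GcP = W^+/M\ch{M}$, hence $X(\calP) = M\ch{M}/M$ and $|X(\calP)| = [M\ch{M} : M]$; in the same way $X(\calP \mix \calQ) = N\ch{N}/N$.

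Next I would build the map. The covering $q \colon W^+/N \to W^+/M$ (well defined since $N \le M$) carries $X(\calP \mix \calQ) = N\ch{N}/N$ into $X(\calP) = M\ch{M}/M$, because $\ch{N} \le \ch{M}$; its image is exactly $M\ch{N}/M$. Denote the restriction by $\phi \colon X(\calP \mix \calQ) \to X(\calP)$. Since $|\mathrm{im}\,\phi|$ divides $|X(\calP \mix \calQ)|$, it suffices to show that the index $d := [X(\calP) : \mathrm{im}\,\phi] = [M\ch{M} : M\ch{N}]$ divides $g_1$: then $|\mathrm{im}\,\phi| = |X(\calP)|/d$ is a multiple of $|X(\calP)|/g_1$, which yields the first divisibility.

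The step I expect to be the main obstacle is bounding $d$. Transporting along the isomorphism $X(\calP) \cong \ch{M}/(M \cap \ch{M})$ and invoking Dedekind's modular law (legitimate because $\ch{N} \le \ch{M}$), I would identify $\mathrm{im}\,\phi$ with $(M \cap \ch{M})\ch{N}/(M \cap \ch{M})$, so that $d = [\ch{M} : (M \cap \ch{M})\ch{N}]$, which divides $[\ch{M} : \ch{N}]$. Finally $[\ch{M} : \ch{N}] = [\ch{M} : \ch{M} \cap \ch{K}] = [\ch{M}\ch{K} : \ch{K}]$ divides $[W^+ : \ch{K}] = |\GcQ| = |\GQ|$, the last equality holding because $\ch{\calQ} \simeq \calQ$. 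As $d \mid |X(\calP)|$ by Lagrange's theorem as well, $d$ divides $g_1 = \gcd(|X(\calP)|, |\GQ|)$, as required. Interchanging the roles of $M$ and $K$ throughout then gives the divisibility by $|X(\calQ)|/g_2$.
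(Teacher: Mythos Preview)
Your proof is correct, but it follows a genuinely different route from the paper's. The paper argues purely by counting: since $\G^+(\calP \mix \ch{\calP} \mix \calQ \mix \ch{\calQ})$ covers $\GP \mix \GcP$, one obtains that $|\GP|\,|X(\calP)|$ divides $|\GP \mix \GQ|\,|X(\calP \mix \calQ)|$, hence divides $|\GP|\,|\GQ|\,|X(\calP \mix \calQ)|$; cancelling $|\GP|$ shows that $|X(\calP)|$ divides $|\GQ|\,|X(\calP \mix \calQ)|$, and the elementary fact that $|X(\calP)|/g_1$ and $|\GQ|/g_1$ are coprime finishes the job. Your approach instead constructs an explicit homomorphism $\phi\colon X(\calP \mix \calQ) \to X(\calP)$ induced by the covering $\GPQ \to \GP$ and bounds the index of its image via the second isomorphism theorem and Dedekind's modular law. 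The paper's argument needs only divisibility of orders and avoids any subgroup-lattice manipulation; yours is more structural and actually exhibits a natural map whose cokernel has order dividing $g_1$, which is a bit more information and could be reused if one wanted functorial statements about how chirality groups behave under mixing.
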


	\begin{proof}
	The group $\GP \mix \GcP$ is covered by $(\GP \mix \GQ) \mix (\GcP \mix \GcQ)$. The former has size
	$|\GP||X(\calP)|$, while the latter has size $|\GP \mix \GQ||X(\calP \mix \calQ)|$. Then $|\GP||X(\calP)|$
	divides $|\GP \mix \GQ||X(\calP \mix \calQ)|$, which divides $|\GP||\GQ||X(\calP \mix \calQ)|$.
	Therefore, $|X(\calP)|$ divides $|\GQ||X(\calP \mix \calQ)|$. Since $g_1$ divides both $|X(\calP)|$
	and $|\GQ|$, we get that $|X(\calP)| / g_1$ divides $|\GQ||X(\calP \mix \calQ)| / g_1$. Furthermore,
	$|X(\calP)| / g_1$ is coprime to $|\GQ| / g_1$, and thus $|X(\calP)| / g_1$ must divide $|X(\calP \mix \calQ)|$.
	The result then follows by symmetry.
	\end{proof}
	
	Armed with this new result, we can now say something about the chirality of $(\calP \mix \calQ) \mix \calR$.
	In particular, if $(\calP \mix \calQ) \mix \calR$ is directly regular, then $|X(\calP \mix \calQ)|$ divides
	$|\GR|$ by \tref{thm:chiral-mix-criterion}; by \tref{thm:chirality-gp-size}, we conclude that 
	$|X(\calP)| / g_1$ and $|X(\calQ)| / g_2$ both divide $|\GR|$.
	
	Let us return to our previous example, where $\calP$ is the chiral polyhedron $\{4, 4\}_{(b, c)}$ with a 
	chirality group of order $p = b^2 + c^2$. Suppose $p$ is an odd prime, and let $\calQ$ and $\calR$ be finite
	chiral or directly regular polyhedra such that $p$ does not divide $|\GQ|$ or $|\GR|$. Then by
	\tref{thm:chirality-gp-size}, $p$ must divide $|X(\calP \mix \calQ)|$. Since $p$ does not divide $|\GR|$,
	that means that $|X(\calP \mix \calQ)|$ does not divide $|\GR|$, and thus by \tref{thm:chiral-mix-criterion},
	the polyhedron $\calP \mix \calQ \mix \calR$ is chiral.
	
	If the chirality group of $\calP$ is simple (as it was in our previous example) and $\calP \mix \calQ$
	is chiral, we can actually determine the chirality group of $\calP \mix \calQ$. \pref{prop:chirality-subgroup}
	below was proved in \cite{const}.
	
	\begin{proposition}
	\label{prop:chirality-subgroup}
	Let $\calP$ be a chiral $n$-polytope and let $\calQ$ be a directly regular $n$-polytope. Then $X(\calP \mix
	\calQ)$ is a normal subgroup of $X(\calP)$.
	\end{proposition}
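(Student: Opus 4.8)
The plan is to carry out the entire argument inside the normal subgroup lattice of $W^+$, realizing both chirality groups as honest subgroups (via the comix map $\pi_2$ from the definition) so that the phrase ``normal subgroup of $X(\calP)$'' has a literal meaning. Write $\GP = W^+/M$ and $\GQ = W^+/K$ with $M,K$ normal in $W^+$. The one structural input I would isolate at the outset is that $\calQ$ is directly regular, so $\ch{K} = K$. Setting $L = M \cap K$, \pref{prop:mix} gives $\GPQ = W^+/L$, and the fact that drives everything is the resulting simplification $\ch{L} = \ch{M} \cap \ch{K} = \ch{M} \cap K$. I would then realize the chirality groups concretely: using $\pi_2 \colon \GP \to \GP \comix \GcP = W^+/M\ch{M}$, we get $X(\calP) = M\ch{M}/M$ as a subgroup of $\GP$, and likewise $X(\calP \mix \calQ) = L\ch{L}/L = (M\cap K)(\ch{M}\cap K)/(M\cap K)$ as a subgroup of $\GPQ$ (the chirality group is given by the same quotient formula for the pre-polytope $\calP \mix \calQ$).

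Since $L \leq M$, the pre-polytope $\calP \mix \calQ$ covers $\calP$, so there is a natural surjection $\phi \colon W^+/L \to W^+/M$. The first step is to show that $\phi$ restricts to an \emph{injection} $X(\calP \mix \calQ) \hookrightarrow X(\calP)$. That the image lands in $X(\calP)$ is immediate from $L\ch{L} \subseteq M\ch{M}$. For injectivity I would note $\ker\phi = M/L$ and compute $L\ch{L} \cap M$: writing a typical element of $L\ch{L}$ as $ab$ with $a \in M \cap K$ and $b \in \ch{M} \cap K$, we have $ab \in M$ exactly when $b \in M$, i.e. when $b \in M \cap \ch{M} \cap K \subseteq L$; hence $L\ch{L} \cap M = L$, and therefore $\ker\phi \cap X(\calP\mix\calQ)$ is trivial. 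This is precisely the step where direct regularity of $\calQ$ is used: $b$ lies in $K$ as well as in $\ch{M}$, so $b \in M$ forces $b \in M \cap K = L$.

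For normality I would compute the image. Since $L \leq M$, one has $\phi(L\ch{L}/L) = M\ch{L}/M = M(\ch{M}\cap K)/M$, which sits inside $X(\calP) = M\ch{M}/M$ as already observed. To see it is normal there, it suffices to show $M(\ch{M}\cap K) \trianglelefteq M\ch{M}$. But $M$ and $\ch{M}\cap K$ are both normal in $W^+$ (the latter being an intersection of the normal subgroups $\ch{M}$ and $K$), so their product $M(\ch{M}\cap K)$ is normal in $W^+$; being contained in $M\ch{M}$, it is a fortiori normal in $M\ch{M}$. Passing to the quotient by $M$ yields $M(\ch{M}\cap K)/M \trianglelefteq M\ch{M}/M = X(\calP)$, which is the desired conclusion.

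The only genuinely delicate points are choosing compatible realizations of $X(\calP)$ and $X(\calP \mix \calQ)$ via $\pi_2$ so that the embedding is exactly the restriction of the natural covering $\phi$, and the injectivity computation $L\ch{L} \cap M = L$. The latter is where I expect the main obstacle to lie, and it is exactly where the hypothesis $\ch{K} = K$ enters essentially: were $\calQ$ permitted to be chiral, $\ch{L}$ would equal $\ch{M} \cap \ch{K}$ with no simplification, and the containment $\ch{L} \cap M \subseteq L$ needed for injectivity could fail. Everything else reduces to routine bookkeeping with products and intersections of normal subgroups of $W^+$.
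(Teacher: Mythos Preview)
The paper does not actually prove \pref{prop:chirality-subgroup}; it only records the statement and cites \cite{const} for the proof, so there is no in-paper argument to compare yours against.

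Your argument is correct. With $\GP = W^+/M$, $\GQ = W^+/K$, $L = M\cap K$, and $\ch{K}=K$, the two chirality groups are realized as $X(\calP)=M\ch{M}/M\leq W^+/M$ and $X(\calP\mix\calQ)=L\ch{L}/L\leq W^+/L$, and you transport the latter along the natural cover $\phi\colon W^+/L\to W^+/M$. The injectivity step $M\cap L\ch{L}=L$ is the heart of the matter and is handled cleanly: writing $ab$ with $a\in L\leq M$ and $b\in\ch{L}=\ch{M}\cap K$, membership in $M$ forces $b\in M\cap\ch{M}\cap K\subseteq M\cap K=L$; you correctly flag that this is exactly where the hypothesis $\ch{K}=K$ is used. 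The image computation $\phi(L\ch{L}/L)=M(\ch{M}\cap K)/M$ and the normality argument (product of normal subgroups of $W^+$ is normal in $W^+$, hence in $M\ch{M}$) are routine and correct. Nothing is missing.
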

	
	\begin{theorem}
	\label{thm:simple-chirality-gp}
	Let $\calP$ be a finite chiral polytope such that $X(\calP)$ is simple. Let $\calQ$ be a finite directly
	regular polytope. If $|X(\calP)|$ does not divide $|\GQ|$, then $\calP \mix \calQ$ is chiral and
	$X(\calP \mix \calQ) = X(\calP)$.
	\end{theorem}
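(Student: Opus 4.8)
The plan is to combine the chirality criterion of \tref{thm:chiral-mix-criterion} with the normal-subgroup structure supplied by \pref{prop:chirality-subgroup}, and then to exploit the simplicity of $X(\calP)$ to pin down $X(\calP \mix \calQ)$ exactly.

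First I would establish that $\calP \mix \calQ$ is chiral. Since $\calQ$ is directly regular, its chirality group $X(\calQ)$ is trivial, and in particular $\calP$ and $\calQ$ are not both directly regular because $\calP$ is chiral. The hypothesis that $|X(\calP)|$ does not divide $|\GQ|$ is precisely the first of the two disjuncts required by \tref{thm:chiral-mix-criterion}, so that theorem applies directly and tells us that $\calP \mix \calQ$ is chiral. Equivalently, $X(\calP \mix \calQ)$ is nontrivial, since a quotient of $W^+$ of this kind is directly regular exactly when its chirality group is trivial.

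Next I would identify $X(\calP \mix \calQ)$ with $X(\calP)$. By \pref{prop:chirality-subgroup} (applicable because $\calP$ is chiral and $\calQ$ is directly regular), $X(\calP \mix \calQ)$ is a normal subgroup of $X(\calP)$. As $X(\calP)$ is simple, its only normal subgroups are the trivial group and $X(\calP)$ itself. The first possibility would force $\calP \mix \calQ$ to be directly regular, contradicting the chirality established in the previous step; hence $X(\calP \mix \calQ) = X(\calP)$, as claimed.

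The argument is short because the substantive work has already been carried out in the preceding results, so there is no single hard obstacle; the one point that requires care is that it is \emph{simplicity} of $X(\calP)$, rather than mere primality of $|X(\calP)|$, that makes the final step go through in general. If $X(\calP)$ were assumed only to have prime order, one could alternatively invoke \tref{thm:chirality-gp-size} together with the containment from \pref{prop:chirality-subgroup} and compare orders. But for a nonabelian simple group the greatest common divisor $g_1 = \gcd(|X(\calP)|, |\GQ|)$ need not be $1$, so the divisibility bound of \tref{thm:chirality-gp-size} alone would not force $X(\calP \mix \calQ)$ to equal all of $X(\calP)$. The normal-subgroup dichotomy sidesteps this difficulty entirely, so I would present the proof via \pref{prop:chirality-subgroup} rather than through the order divisibility of \tref{thm:chirality-gp-size}.
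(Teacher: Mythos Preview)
Your proof is correct and follows essentially the same approach as the paper: apply \tref{thm:chiral-mix-criterion} to get chirality of $\calP \mix \calQ$, then use \pref{prop:chirality-subgroup} together with the simplicity of $X(\calP)$ to force $X(\calP \mix \calQ) = X(\calP)$. Your additional remarks on why simplicity (rather than prime order) is the right hypothesis are accurate and go a bit beyond what the paper spells out, but the core argument is identical.
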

	
	\begin{proof}
	By \tref{thm:chiral-mix-criterion}, the mix $\calP \mix \calQ$ is chiral. \pref{prop:chirality-subgroup} says
	that $X(\calP \mix \calQ)$ is a normal subgroup of $X(\calP)$, which is simple. Since $\calP \mix
	\calQ$ is chiral, $X(\calP \mix \calQ)$ must be nontrivial, and thus we have that $X(\calP \mix \calQ) =
	X(\calP)$.
	\end{proof}
	
	Thus, we see that if $\calP$ is a chiral polytope with simple chirality group, and if $\calQ_1, \ldots,
	\calQ_k$ are finite directly regular polytopes such that $|X(\calP)|$ does not divide any $|\G^+(\calQ_i)|$, 
	then $\calP \mix \calQ_1 \mix \cdots \mix \calQ_k$ is chiral.
	
	Returning to the mix of two polytopes, we also get a nice result in the case where $\calQ$ has a simple
	rotation group:
	
	\begin{theorem}
	\label{thm:simple-rot-gp}
	Let $\calP$ be a finite chiral polytope, and let $\calQ$ be a finite chiral polytope such 
	that $\GQ$ is simple. If $X(\calP)$ is not isomorphic to $\GQ$, then $\calP \mix \calQ$ is chiral.
	\end{theorem}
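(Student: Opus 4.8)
The plan is to argue by contradiction: I would assume $\calP \mix \calQ$ is directly regular and deduce that $X(\calP) \cong \GQ$, contrary to hypothesis. Write $\GP = W^+/M$ and $\GQ = W^+/K$ with $M, K \trianglelefteq W^+$, so that $\GPQ = W^+/(M \cap K)$ by \pref{prop:mix}. Let $\chi$ denote the involutory automorphism of $W^+$ with $\chi(w) = \ch{w}$, so $\chi(M) = \ch{M}$ and $X(\calP) \cong M\ch{M}/M$; the assumption that $\calP \mix \calQ$ is directly regular says precisely that $\chi(M \cap K) = M \cap K$.

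The first step is a reduction using the simplicity of $\GQ$. The kernel of the natural surjection $\GPQ \to \GP$ is $M/(M \cap K) \cong MK/K$, a normal subgroup of the simple group $\GQ = W^+/K$, so either $MK = K$ or $MK = W^+$. If $MK = K$ then $M \le K$, forcing $\GPQ = \GP$ and hence $\calP \mix \calQ \cong \calP$, which is chiral --- contradicting our assumption. (It is also worth noting that since $\calQ$ is chiral, $X(\calQ)$ is a nontrivial normal subgroup of $\GQ$, hence $X(\calQ) = \GQ$; so \tref{thm:chiral-mix-criterion} already disposes of every case with $|\GQ| \nmid |\GP|$, and the real content lies in the case $MK = W^+$.)

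So suppose $MK = W^+$. Then $\GP \comix \GQ = W^+/MK$ is trivial by \pref{prop:comix}, so $\GPQ = \GP \times \GQ$ by \cref{cor:trivial-comix}. Since $M \cap K$ is $\chi$-invariant, $\chi$ descends to an automorphism $\bar\chi$ of $\GP \times \GQ$, and $M \cap K = \ch{M} \cap \ch{K} \le \ch{M}$, so that $N_1 := M/(M \cap K)$ and $N_2 := \ch{M}/(M \cap K)$ are subgroups with $\bar\chi(N_1) = N_2$. As $M$ and $\ch{M}$ are normal in $W^+$, both $N_1$ and $N_2$ are normal in $\GP \times \GQ$, and $N_1 \cong MK/K = \GQ$ is simple, whence $N_2 \cong N_1 \cong \GQ$ as well. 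Because $\calP$ is chiral we have $M \ne \ch{M}$, so $N_1 \ne N_2$; since $N_1$ is simple and both are normal, this forces $N_1 \cap N_2 = 1$. The image of $M\ch{M}$ in $\GP \times \GQ$ is $N_1 N_2$, so $X(\calP) \cong M\ch{M}/M \cong N_1 N_2/N_1 \cong N_2/(N_1 \cap N_2) = N_2 \cong \GQ$, the desired contradiction.

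I expect the main obstacle to be the careful handling of the enantiomorphism. One must use that $\ch{\,\cdot\,}$ genuinely comes from an automorphism $\chi$ of $W^+$ (so that $\ch{M}$ is normal and $\chi$ respects intersections), that direct regularity of $\calP \mix \calQ$ is exactly what lets $\chi$ descend to $\GP \times \GQ$, and --- the crux --- that the two normal copies $N_1, N_2$ of the simple group $\GQ$ are distinct, which is where the chirality of $\calP$ is used. Once these points are secured, simplicity of $\GQ$ forces $N_1 \cap N_2 = 1$ and the isomorphism $X(\calP) \cong \GQ$ drops out immediately.
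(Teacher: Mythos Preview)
Your proof is correct and is, at bottom, the same argument as the paper's, repackaged in the language of subgroups of $W^+$ rather than a commutative diagram. The paper assumes $\calP \mix \calQ$ is directly regular, invokes \lref{lem:regular-mix-crit} to get the natural covering $f_1 : \GPQ \to \GP \mix \GcP$, and then analyzes the triangle $f_2 = g \circ f_1$ with $g : \GP \mix \GcP \to \GP$; its case split is on whether $\ker f_1$ is trivial or all of $\GQ$. Under the dictionary $\ker f_2 = N_1$, $\ker f_1 = N_1 \cap N_2$, and $\ker g = X(\calP) \cong N_1 N_2 / N_1$, the paper's two cases are exactly your dichotomy $N_1 \cap N_2 = 1$ versus $N_1 \cap N_2 = N_1$ inside the case $MK = W^+$ (your preliminary case $MK = K$ is the degenerate situation $\ker f_2 = 1$, which the paper absorbs into its Case~1). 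Your version is a touch more self-contained in that it avoids citing \lref{lem:regular-mix-crit} and instead uses directly that $\chi$ fixes $M \cap K$; the paper's diagram makes the role of $X(\calP) = \ker g$ slightly more visible. Either way, the content is identical.
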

	
	\begin{proof}
	Suppose $\calP \mix \calQ$ is directly regular. Then by \lref{lem:regular-mix-crit}, $\GP \mix \GQ$ naturally
	covers $\GP \mix \GcP$. Thus, we get the following commutative diagram, where the maps are all the natural
	covering maps:
	\[ \xymatrix{
	\GP \mix \GQ \ar[rr]^{f_1} \ar[rd]_{f_2} & & \GP \mix \GcP \ar[dl]^{g} \\
	& \GP &
	} \]
	So we have that $\ker f_2 = \ker (g \circ f_1)$, and in particular, $\ker f_1$ is normal
	in $\ker f_2$. Now, we can view $\ker f_2$ as a normal subgroup of $\GQ$, which is simple. 
	Then $\ker f_2$ is simple (possibly trivial), and $\ker f_1$ is likewise simple (possibly trivial).
	Suppose $\ker f_1$ is trivial. Then $\ker f_2 = \ker g = X(\calP)$. Since $\calP$ is chiral, $\ker g$
	is nontrivial. Therefore, $\ker f_2$ is nontrivial, so we must have $\ker f_2 \simeq \GQ$. But then
	$X(\calP) \simeq \GQ$, violating our assumptions. Now, suppose instead that $\ker f_1 \simeq \GQ$. Then we
	must have $\ker f_2 \simeq \GQ$ as well. Then since $\ker f_2 = \ker (g \circ f_1)$, we must have
	that $\ker g$ is trivial, contradicting that $\calP$ is chiral. Therefore, $\calP \mix \calQ$ must
	be chiral.
	\end{proof}
	
	\begin{theorem}
	\label{thm:simple-rot-gp-2}
	Let $\calP$ be a finite chiral polytope, and let $\calQ$ be a finite directly regular polytope with a simple
	rotation group $\GQ$. If $X(\calP)$ is not isomorphic to $\GQ$, then $X(\calP \mix \calQ) = X(\calP)$.
	\end{theorem}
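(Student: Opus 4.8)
The plan is to realize every chirality group in sight as a subquotient of $W^+$, reduce the equality $X(\calP \mix \calQ) = X(\calP)$ to a single index computation, and then eliminate the one remaining bad case using the simplicity of $\GQ$ together with the enantiomorphism involution. Write $\GP = W^+/M$ and $\GQ = W^+/K$. Since $\calQ$ is directly regular, $\ch{K} = K$, and since $w \mapsto \ch{w}$ is an involutory automorphism of $W^+$, the subgroup $\ch{M}$ is again normal in $W^+$. Then $\G^+(\calP \mix \calQ) = W^+/(M \cap K)$, its minimal directly regular cover is $W^+/(M \cap \ch{M} \cap K)$ (using $\ch{M \cap K} = \ch{M} \cap K$), and $\calP \mix \ch{\calP}$ has group $B := \GP \mix \GcP = W^+/(M \cap \ch{M})$. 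Reading off kernels of the natural coverings gives $X(\calP) \cong M/(M \cap \ch{M})$ and $X(\calP \mix \calQ) \cong (M \cap K)/(M \cap \ch{M} \cap K)$.

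By \pref{prop:chirality-subgroup}, $X(\calP \mix \calQ)$ is a normal subgroup of $X(\calP)$; concretely the inclusion $M \cap K \hookrightarrow M$ induces this embedding, with image $(M \cap K)(M \cap \ch{M})/(M \cap \ch{M})$. So it suffices to show the index $[X(\calP) : X(\calP \mix \calQ)] = [M : (M \cap K)(M \cap \ch{M})]$ equals $1$. This index divides $[M : M \cap K] = |MK/K|$, and $MK/K$ is a normal subgroup of the simple group $\GQ$. Hence the cokernel $M/[(M \cap K)(M \cap \ch{M})]$, being a quotient of $MK/K$, is either trivial, in which case we are done, or isomorphic to $\GQ$.

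It remains to rule out the case where the cokernel is isomorphic to $\GQ$, and this is where I expect the real work. A short computation shows that this case forces $MK = W^+$ (whence also $\ch{M}K = W^+$ by applying the involution) and $M \cap \ch{M} \subseteq K$. I would then work inside the directly regular group $B = W^+/(M \cap \ch{M})$, on which $w \mapsto \ch{w}$ induces an involution $c$ interchanging the images $\bar{M}$ and $\bar{\ch{M}}$ of $M$ and $\ch{M}$ (each isomorphic to $X(\calP)$) while fixing $\bar{K} = K/(M \cap \ch{M})$, with $B/\bar{K} \cong \GQ$. The conditions $MK = \ch{M}K = W^+$ make both $\bar{M}$ and $\bar{\ch{M}}$ surject onto $\GQ$, so inside $N = \bar{M} \times \bar{\ch{M}}$ the subgroup $D = N \cap \bar{K}$ contains $(\bar{M} \cap \bar{K}) \times (\bar{\ch{M}} \cap \bar{K})$, and modulo this product $N$ becomes $\GQ \times \GQ$ while $D$ becomes a normal, $c$-invariant (hence swap-invariant) subgroup of index $|\GQ|$.

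The contradiction is then structural: for a nonabelian simple $\GQ$ the only normal subgroups of $\GQ \times \GQ$ of index $|\GQ|$ are the two direct factors, which $c$ interchanges, so no swap-invariant one exists. To complete the argument I would observe that nonabelianness is automatic here: since $\calP$ is chiral the common rank is at least $3$, and no abelian simple group $\mathbb{Z}/p$ arises as the rotation group of such a polytope, because $(\s_1 \s_2)^2 = \eps$ would force $\s_1 \s_2 = \eps$ and the intersection property would fail. Thus the cokernel must be trivial and $X(\calP \mix \calQ) = X(\calP)$. The delicate point, and the one I would verify most carefully, is precisely the passage to $B$ and the bookkeeping that makes $D$ a genuine swap-invariant normal subgroup of $\GQ \times \GQ$ of index $|\GQ|$; everything else is routine once the subgroup dictionary is set up.
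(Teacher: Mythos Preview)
Your argument is correct and takes a genuinely different route from the paper's. The paper's proof is a short cardinality count in the commutative square of natural coverings
\[
\xymatrix@!C{
\G^+(\calP \mix \ch{\calP} \mix \calQ) \ar[r]^{f_1} \ar[d]_{f_2} & \G^+(\calP \mix \ch{\calP}) \ar[d]^{g_1} \\
\G^+(\calP \mix \calQ) \ar[r]_{g_2} & \GP
}
\]
identifying $\ker f_2 = X(\calP \mix \calQ)$, $\ker g_1 = X(\calP)$, and observing that $\ker f_1$ and $\ker g_2$ are normal subgroups of the simple group $\GQ$; casing on $\ker g_2$ and using $|\ker f_2|\cdot|\ker g_2| = |\ker f_1|\cdot|\ker g_1|$ gives $|X(\calP \mix \calQ)| \leq |X(\calP)|$, after which \pref{prop:chirality-subgroup} finishes. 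You instead work directly in the subgroup lattice of $W^+$, reduce to the single bad configuration $MK = W^+$ and $M \cap \ch{M} \subseteq K$, and then use the enantiomorphism involution on $B = W^+/(M \cap \ch{M})$ to manufacture a swap-invariant normal subgroup of index $|\GQ|$ in $\GQ \times \GQ$, which cannot exist for nonabelian simple $\GQ$.

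What your approach buys is a structural contradiction rather than a size estimate, and---notably---your argument never invokes the hypothesis $X(\calP) \not\cong \GQ$. Since you separately rule out abelian simple $\GQ$ via the intersection property in rank $\geq 3$, you have in fact established $X(\calP \mix \calQ) = X(\calP)$ unconditionally for finite chiral $\calP$ and finite directly regular $\calQ$ with simple rotation group. The paper's approach is shorter and stays within the diagram-chasing idiom of the surrounding section; yours makes explicit which structural fact about $\GQ$ (the classification of normal subgroups of $\GQ \times \GQ$) is doing the work. The one point worth writing out carefully is that the automorphism $c$ of $B$, when pushed to the quotient $N/E \cong \GQ \times \GQ$, interchanges the two direct factors \emph{as subgroups} (even though it need not be the literal coordinate swap); this is exactly what you need and follows from $c(\bar{M}) = \bar{\ch{M}}$ and $c(E) = E$.
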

	
	\begin{proof}
	Consider the commutative diagram below, where the maps are all the natural covering maps:
	\[ \xymatrix@!C{
	\G^+(\calP \mix \ch{\calP} \mix \calQ \mix \ch{\calQ}) \ar[r]^{f_1} \ar[d]_{f_2} &
	\G^+(\calP \mix \ch{\calP}) \ar[d]^{g_1} \\
	\G^+(\calP \mix \calQ) \ar[r]_{g_2} & \GP
	} \]
	We have that $|\ker f_2|\cdot|\ker g_2| = |\ker f_1|\cdot|\ker g_1|$. Furthermore, $\ker f_2 =
	X(\calP \mix \calQ)$ and $\ker g_1 = X(\calP)$. Now, since $\calQ$ is directly regular, the group
	$\G^+(\calP \mix \ch{\calP} \mix \calQ \mix \ch{\calQ})$ is the same as $\G^+(\calP \mix \ch{\calP} \mix 
	\calQ)$, and therefore, we can view $\ker f_1$ as a normal subgroup of $\GQ$. We can similarly
	view $\ker g_2$ as a normal subgroup of $\GQ$. Suppose $\ker g_2$ is trivial. Then $\calP \mix \calQ
	= \calP$, so that $X(\calP \mix \calQ) = X(\calP)$. Otherwise, we must have that $\ker g_2 \simeq
	\GQ$ since $\GQ$ is simple. Then we get that 
	\[ |X(\calP \mix \calQ)|\cdot|\GQ| = |\ker f_1|\cdot|X(\calP)| \leq |\GQ|\cdot|X(\calP)|. \]
	Therefore, $|X(\calP \mix \calQ)| \leq |X(\calP)|$. By \pref{prop:chirality-subgroup}, $X(\calP)$
	is a normal subgroup of $X(\calP \mix \calQ)$, so we see that again we must have $X(\calP) = X(\calP \mix
	\calQ)$.
	\end{proof}

	Several recent papers address which finite simple groups are the automorphism groups of regular polytopes
	(for example, see \cite{leemans-simple}, \cite{hartley-simple}).
	By mixing these regular polytopes with chiral polytopes, we get a rich source of new chiral polytopes.

	So far, we have only considered the chirality groups of finite polytopes. However, a claim similar to
	\tref{thm:chirality-gp-size} is true for chiral polytopes with an infinite chirality group:
	
	\begin{theorem}
	\label{thm:infinite-chirality-gp}
	Let $\calP$ be a chiral polytope such that $X(\calP)$ is infinite. Let $\calQ$ be a finite chiral or directly 
	regular polytope. Then $X(\calP \mix \calQ)$ is infinite.
	\end{theorem}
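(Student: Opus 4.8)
The plan is to follow the spirit of the counting argument in \tref{thm:chirality-gp-size}, but to replace the divisibility bookkeeping (which is meaningless for infinite groups) by a short exact sequence that isolates a finite ``error term'' controlled entirely by $\calQ$. The guiding idea is that $X(\calP \mix \calQ)$ and $X(\calP)$ should differ only by a piece that embeds in the finite group $\GQ$.

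First I would fix the standard presentations $\GP = W^+/M$ and $\GQ = W^+/K$, so that $\G^+(\calP \mix \calQ) = W^+/L$ with $L = M \cap K$ by \pref{prop:mix}. Writing $\ch{L} = \ch{M} \cap \ch{K}$, and recalling that the chirality group of any (pre-)polytope $\calR$ with $\G^+(\calR) = W^+/J$ is the kernel of $\G^+(\calR \mix \ch{\calR}) \to \G^+(\ch{\calR})$, I get the group-theoretic descriptions
\[ X(\calP) \cong \ch{M}/(M \cap \ch{M}), \qquad X(\calP \mix \calQ) \cong \ch{L}/(L \cap \ch{L}). \]
Since $M, K, \ch{M}, \ch{K}$ are all normal in $W^+$, so are $L$, $\ch{L}$, and $L \cap \ch{L}$, and every quotient below is a genuine group.

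The heart of the argument is to sandwich $X(\calP \mix \calQ)$ inside a group that visibly surjects onto $X(\calP)$. Because $L \cap \ch{L} \leq \ch{L} \leq \ch{M}$, I would consider the short exact sequence
\[ 1 \longrightarrow \ch{L}/(L \cap \ch{L}) \longrightarrow \ch{M}/(L \cap \ch{L}) \longrightarrow \ch{M}/\ch{L} \longrightarrow 1, \]
whose left-hand term is exactly $X(\calP \mix \calQ)$. I then establish two facts. First, since $L \cap \ch{L} = M \cap K \cap \ch{M} \cap \ch{K} \leq M \cap \ch{M}$, the middle term admits a natural surjection onto $\ch{M}/(M \cap \ch{M}) \cong X(\calP)$; as $X(\calP)$ is infinite by hypothesis, the middle term is infinite. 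Second, by the second isomorphism theorem,
\[ \ch{M}/\ch{L} = \ch{M}/(\ch{M} \cap \ch{K}) \cong \ch{M}\,\ch{K}/\ch{K} \leq W^+/\ch{K} = \GcQ, \]
and since $\calQ$ is finite the group $\GcQ$ is finite, so the right-hand term is finite.

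Combining these, the middle of the sequence is infinite while its quotient is finite, which forces the kernel $\ch{L}/(L \cap \ch{L}) = X(\calP \mix \calQ)$ to be infinite — the claim. The only point requiring care, and the step I would double-check most, is the interaction of the enantiomorphism bar with intersections and orders: namely that $\ch{(M \cap K)} = \ch{M} \cap \ch{K}$ and $|\GcQ| = |\GQ|$. Both follow from the fact, used throughout the paper, that $w \mapsto \ch{w}$ is an involutory automorphism of $W^+$, so it preserves intersections and carries $K$ to a subgroup of the same index. Beyond this routine bookkeeping I expect no real obstacle: the finiteness of $\calQ$ is precisely what bounds the ``defect'' $\ch{M}/\ch{L}$ relating $X(\calP \mix \calQ)$ to $X(\calP)$.
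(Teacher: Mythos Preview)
Your argument is correct and is essentially the paper's own proof, recast in the language of subgroups of $W^+$ rather than a commutative square of covering maps. The paper's diagram has top-left vertex $W^+/(L\cap\ch{L})$ and the composite kernel $\ker(g_2\circ f_2)=\ker(g_1\circ f_1)$ there is precisely (the unbarred version of) your middle term $\ch{M}/(L\cap\ch{L})$; your short exact sequence is exactly the factorization $\ker f_2 \trianglelefteq \ker(g_2\circ f_2)$ with quotient $\ker g_2 \hookrightarrow \GQ$, so the two proofs coincide up to applying the bar automorphism.
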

	
	\begin{proof}
	Consider the commutative diagram below, where the maps are all the natural covering maps:
	\[ \xymatrix@!C{
	\G^+(\calP \mix \ch{\calP} \mix \calQ \mix \ch{\calQ}) \ar[r]^{f_1} \ar[d]_{f_2} &
	\G^+(\calP \mix \ch{\calP}) \ar[d]^{g_1} \\
	\G^+(\calP \mix \calQ) \ar[r]_{g_2} & \GP
	} \]
	Then $\ker (g_1 \circ f_1) = \ker (g_2 \circ f_2)$. Now, since $X(\calP) = \ker g_1$ is infinite,
	it follows that $\ker (g_1 \circ f_1)$ is infinite. Now, $\ker f_2 = X(\calP \mix \calQ)$, and we can view 
	$\ker g_2$ as a subgroup of $\GQ$, which is finite. If $X(\calP \mix \calQ)$ is finite, then so is 
	$\ker (g_2 \circ f_2) = \ker (g_1 \circ f_1)$.  So we must have that $X(\calP \mix \calQ)$ is infinite.
	\end{proof}
	
	Determining whether $\calP \mix \calQ$ is chiral when both $\calP$ and $\calQ$ are infinite is sometimes
	possible, but it requires more machinery, and we won't develop it here.
		
\section{New examples of chiral polytopes}

	Now we will apply the results of the previous section in order to construct new examples of chiral polytopes. 
	We start by giving a few concrete new examples of small chiral $5$-polytopes. 
	
	Let $\calP$ be the chiral polytope of type $\{3, 4, 4, 3\}$ in \cite{chp}, with automorphism group $S_6$.
	A direct calculation in GAP \cite{gap} shows that the chirality group is $A_6$. Let $\calQ$ be the
	degenerate directly regular polytope $\{2, 3, 3, 2\}$. Then we have that $|X(\calP)| = 360$, while
	$|\GQ| = 48$. Then by \tref{thm:simple-chirality-gp}, $\calP \mix \calQ$ is chiral with chirality
	group $A_6$. Furthermore, by \pref{prop:rel-prime-type}, $\calP \mix \calQ$ is polytopal. We get a chiral
	polytope of type $\{6, 12, 12, 6\}$ with 12 vertices, 120 edges, 480 2-faces, 120 3-faces, and 12 4-faces.
	The total number of flags is 69120.
	
	Mixing $\calP$ with $\{2, 3, 3, 3\}$ also yields a new chiral polytope. In this case, \tref{thm:rel-prime-type}
	tells us that the mix is polytopal. We get a chiral polytope of type $\{6, 12, 12, 3\}$
	with 12 vertices, 150 edges, 2400 2-faces, 300 3-faces, and 30 4-faces. The number of flags is
	1728000.
	
	We now turn to some more general results. One particularly useful source of directly regular polytopes to mix 
	with chiral polytopes are the cubic $n$-toroids
	$\{4, 3^{n-3}, 4\}_{(s^k, 0^{n-k-1})}$, defined in \cite[Ch. 6D]{arp}. For each $s \geq 2$ and $k = 1,
	2,$ or $n-1$, we get a directly regular polytope with a group of size $2^{n+k-2} (n-1)! s^{n-1}$.
	Mixing them with chiral polytopes yields the following:
	
	\begin{theorem}
	\label{thm:toroid-mixing}
	Let $\calP$ be a finite chiral $n$-polytope $(n \geq 3)$, and let $\calQ(s,k)$ be the directly regular $n$-
	polytope $\{4, 3^{n-3}, 4\}_{(s^k, 0^{n-k-1})}$, where $k = 1, 2,$ or $n-1$. If $|X(\calP)|$ does not
	divide $2^{n+k-2} (n-1)!$, then there are infinitely many values of $s$ such that
	$\calP \mix \calQ(s,k)$ is chiral.
	\end{theorem}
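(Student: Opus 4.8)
The plan is to reduce everything to \tref{thm:chiral-mix-criterion}. Since $\calQ(s,k)$ is directly regular, its chirality group is trivial, so $|X(\calQ(s,k))| = 1$ divides $|\GP|$ automatically; the second hypothesis of \tref{thm:chiral-mix-criterion} can therefore never be invoked, and I must instead arrange that $|X(\calP)|$ fails to divide $|\G^+(\calQ(s,k))|$. Writing $C = 2^{n+k-2}(n-1)!$, we are given that $|\G^+(\calQ(s,k))| = C s^{n-1}$, and the hypothesis of the theorem is precisely that $|X(\calP)|$ does not divide $C$. So the entire content boils down to a number-theoretic claim: if $|X(\calP)| \nmid C$, then $|X(\calP)| \nmid C s^{n-1}$ for infinitely many $s \geq 2$.

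First I would extract a single bad prime. Because $|X(\calP)|$ does not divide $C$, comparing $p$-adic valuations (divisibility of $a$ by $b$ is equivalent to $v_p(b) \leq v_p(a)$ for every prime $p$) shows there is a prime $p$ with $v_p(|X(\calP)|) > v_p(C)$. This $p$ is fixed once and for all by $\calP$ and $k$.

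Next I would restrict attention to those $s$ not divisible by $p$. For such $s$ we have $v_p(s^{n-1}) = 0$, hence $v_p(C s^{n-1}) = v_p(C) < v_p(|X(\calP)|)$. Consequently the prime power $p^{v_p(|X(\calP)|)}$ fails to divide $C s^{n-1}$, so $|X(\calP)| \nmid C s^{n-1} = |\G^+(\calQ(s,k))|$. There are infinitely many integers $s \geq 2$ with $p \nmid s$, and for every one of them \tref{thm:chiral-mix-criterion} (applied to the chiral $\calP$ and the directly regular $\calQ(s,k)$, which are certainly not both directly regular) guarantees that $\calP \mix \calQ(s,k)$ is chiral.

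The argument is genuinely elementary; the only ``obstacle'' is recognising that multiplying by $s^{n-1}$ can raise $p$-adic valuations only at primes dividing $s$, so steering clear of the single bad prime $p$ by choosing $s$ coprime to it keeps the offending valuation pinned below that of $|X(\calP)|$. Note also that there is nothing to verify about polytopality here: \tref{thm:chiral-mix-criterion} delivers chirality of the mix as a flag-connected pre-polytope directly, which is all the statement asserts.
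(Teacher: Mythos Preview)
Your argument is correct and follows the same route as the paper: reduce to \tref{thm:chiral-mix-criterion} by ensuring $|X(\calP)| \nmid 2^{n+k-2}(n-1)!\,s^{n-1}$ for infinitely many $s$. The only cosmetic difference is that the paper takes $s$ to be any prime not dividing $|X(\calP)|$, whereas you take $s$ to be any integer coprime to a single fixed bad prime $p$; both choices yield infinitely many admissible $s$ by the same $p$-adic valuation reasoning.
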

	
	\begin{proof}
	If $s$ is a prime that does not divide $|X(\calP)|$, then $|X(\calP)|$ does not divide 
	$|\G^+(\calQ(s,k))| = 2^{n+k-2} (n-1)! s^{n-1}$. Therefore, by \tref{thm:chiral-mix-criterion},
	$\calP \mix \calQ(s,k)$ is chiral for each such $s$.
	\end{proof}

	The condition that $|X(\calP)|$ does not divide $2^{n+k-2} (n-1)!$ is very mild. For example, this holds
	if $|X(\calP)|$ has a prime factor larger than $n-1$. Note, however, that \tref{thm:toroid-mixing} says
	nothing about the polytopality of $\calP \mix \calQ(s, k)$ -- we must establish that separately.

	Next, we showcase one nice example of \tref{thm:simple-rot-gp-2}. Recall that the rotation group of the
	$n$-simplex is $A_{n+1}$, which is simple when $n \geq 4$.
	
	\begin{theorem}
	Let $\calP$ be a finite chiral $n$-polytope ($n \geq 4$) such that $X(\calP)$ is not isomorphic to
	$A_{n+1}$. Let $\calQ$ be the $n$-simplex. Then $\calP \mix \calQ$ is chiral, and $X(\calP \mix \calQ) =
	X(\calP)$.
	\end{theorem}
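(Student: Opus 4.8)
The plan is to recognize this as a direct application of \tref{thm:simple-rot-gp-2}. The key observation is that the $n$-simplex is a directly regular polytope whose rotation group is $A_{n+1}$, and this group is simple precisely when $n \geq 4$. So the hypotheses of \tref{thm:simple-rot-gp-2} are met: $\calP$ is a finite chiral polytope, $\calQ$ is a finite directly regular polytope with simple rotation group $\GQ \simeq A_{n+1}$, and we are given that $X(\calP)$ is not isomorphic to $A_{n+1}$, hence not isomorphic to $\GQ$.

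First I would verify the two pieces of background data about the simplex: that it is directly regular (true, as the convex regular polytopes are all directly regular, per Section 2.3), and that its rotation group is $A_{n+1}$ (stated in the text just before the theorem). Combined with the simplicity of $A_{n+1}$ for $n \geq 4$, this puts us squarely in the setting of \tref{thm:simple-rot-gp-2}.

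Then I would invoke \tref{thm:simple-rot-gp-2} directly. That theorem concludes $X(\calP \mix \calQ) = X(\calP)$. To obtain the additional assertion that $\calP \mix \calQ$ is \emph{chiral}, I would note that $X(\calP)$ is nontrivial (since $\calP$ is chiral), so $X(\calP \mix \calQ) = X(\calP)$ is nontrivial as well, which means $\calP \mix \calQ$ is not directly regular; being the mix of a chiral with a directly regular polytope, its mix group is a quotient of $W^+$ with $X(\calP \mix \calQ) \neq \{\eps\}$, so it is chiral. Alternatively, one could cite \tref{thm:simple-rot-gp} to get chirality immediately, observing that $X(\calP) \not\simeq \GQ$ is exactly that theorem's hypothesis when $\calQ$ is chiral, though here $\calQ$ is directly regular, so the cleaner route is through the nontriviality of the chirality group.

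Since this is a corollary-style application, there is no real obstacle — the substance was proved in \tref{thm:simple-rot-gp-2}. The only point requiring a sentence of care is confirming that $A_{n+1}$ is simple for the relevant range ($n \geq 4$ gives $A_{n+1}$ with $n+1 \geq 5$, and $A_m$ is simple for $m \geq 5$), and then translating ``$X(\calP \mix \calQ) = X(\calP)$ nontrivial'' into the word \emph{chiral}. The proof will therefore be short: a two- or three-sentence verification of hypotheses followed by a citation of \tref{thm:simple-rot-gp-2}.
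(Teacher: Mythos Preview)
Your proposal is correct and matches the paper's approach exactly: the paper presents this theorem as an immediate application of \tref{thm:simple-rot-gp-2}, prefacing it only with the remark that the rotation group of the $n$-simplex is $A_{n+1}$, simple for $n \geq 4$, and offering no separate proof. Your extra sentence deriving chirality from the nontriviality of $X(\calP \mix \calQ) = X(\calP)$ is a detail the paper leaves implicit, but it is the right justification.
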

	
	Now, let us look at an example using \tref{thm:infinite-chirality-gp}. In \cite{chiral-ext}, the
	authors proved that for any chiral polytope $\calK$ with directly regular facets, there is a
	universal chiral polytope $U(\calK)$ with facets isomorphic to $\calK$. This polytope covers all
	other chiral polytopes with facets isomorphic to $\calK$. In many cases, this extension
	will have an infinite chirality group:
	
	\begin{lemma}
	\label{lem:ch-gp-of-extension}
	Let $\calK$ be a chiral polytope with directly regular facets such that $\GK \comix \G^+(\ch{\calK})$ is 
	finite. (For example, $\GK$ might itself be finite.) Let $\calP = U(\calK)$. Let $\GP = \langle \s_1, \ldots,
	\s_n \rangle$ so that $\GK = \langle \s_1, \ldots, \s_{n-1} \rangle$. If $\s_{n-1} = \eps$ in
	$\GK \comix \G^+(\ch{\calK})$, or if $\s_{n-1} = \s_{n-2}$ in $\GK \comix \G^+(\ch{\calK})$,
	then $X(\calP)$ is infinite.
	\end{lemma}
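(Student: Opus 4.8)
The plan is to exploit the description of the chirality group as a kernel. By the definition of $X(\calP)$ given in Section~4, the natural covering $\pi\colon\GP\to\GP\comix\GcP$ satisfies $X(\calP)=\ker\pi$, so that $1\to X(\calP)\to\GP\xrightarrow{\pi}\GP\comix\GcP\to 1$ is exact. Consequently $X(\calP)$ is infinite as soon as $\GP$ is infinite while $\GP\comix\GcP$ is finite. I would therefore split the argument into two independent parts: (i) $\GP\comix\GcP$ is finite, and (ii) $\GP=\G^+(U(\calK))$ is infinite.

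For (i) I would first identify $\GP\comix\GcP$ concretely. Since the facet relators of $\calK$ (and their mirror images) are words in $\s_1,\dots,\s_{n-1}$ only, \pref{prop:comix} presents $\GP\comix\GcP=W^+/M\ch{M}$ as the group obtained from $G:=\GK\comix\G^+(\ch{\calK})$ by adjoining the single generator $\s_n$ subject only to the relations $(\s_i\cdots\s_n)^2=\eps$ for $i\le n-1$. In particular the subgroup $\bar G=\langle\s_1,\dots,\s_{n-1}\rangle$ of $\GP\comix\GcP$ is a quotient of $G$, hence finite by hypothesis, and inside it $\s_{n-1}$ is trivial (resp. equals $\s_{n-2}$). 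The crux is to show that $\s_n$ contributes only a factor of two. Under either hypothesis I would first deduce $\s_n^2=\eps$: when $\s_{n-1}=\eps$ this is immediate from $(\s_{n-1}\s_n)^2=\eps$; when $\s_{n-1}=\s_{n-2}=:c$ it follows by combining $(c\s_n)^2=\eps$ and $(c^2\s_n)^2=\eps$, a short manipulation that forces $\s_n^2=\eps$. With $\s_n^2=\eps$ in hand, each relation $(\s_i\cdots\s_n)^2=\eps$ rewrites as $\s_n\tau\s_n^{-1}=\tau^{-1}$ for $\tau=\s_i\cdots\s_{n-1}\in\bar G$; as these $\tau$ generate $\bar G$, the element $\s_n$ normalizes $\bar G$. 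Hence $\GP\comix\GcP=\bar G\langle\s_n\rangle$ has order at most $2|\bar G|\le 2|\GK\comix\G^+(\ch{\calK})|$, which is finite.

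For (ii) I would argue that the top generator $\s_n$ has infinite order in $\GP$. Because $\calP=U(\calK)$ is the universal chiral polytope with facets $\calK$, no defining relation constrains the order of $\s_n$ — equivalently, the final entry of its Schl\"afli symbol is unbounded — so $\calP$ covers chiral polytopes in which $\s_n$ has arbitrarily large order, whence $\s_n$ has infinite order in $\GP$ and $\GP$ is infinite. Combining (i) and (ii) with the exact sequence above shows $X(\calP)=\ker\pi$ is infinite. I expect the finiteness computation of part (i) — specifically the case $\s_{n-1}=\s_{n-2}$, where one must first extract $\s_n^2=\eps$ before the normalization argument applies — to be the only genuine obstacle; the reduction to ``$\GP\comix\GcP$ finite together with $\GP$ infinite'' and the infinitude of $\GP$ are comparatively routine once the universal structure of $U(\calK)$ is invoked.
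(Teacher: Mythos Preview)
Your proposal is correct and follows essentially the same route as the paper: reduce to showing $\GP\comix\GcP$ is finite, derive $\s_n^2=\eps$ in both cases, and then bound $|\GP\comix\GcP|\le 2|\GK\comix\G^+(\ch{\calK})|$. The only cosmetic differences are that the paper moves $\s_n$ to the right via explicit commutation rules ($\s_n\s_{n-1}=\s_{n-1}^{-1}\s_n$, $\s_n\s_{n-2}=\s_{n-2}\s_{n-1}^{2}\s_n$, and $[\s_n,\s_i]=\eps$ for $i<n-2$) rather than your cleaner observation that $\s_n$ inverts each $\tau_{i,n-1}$ and hence normalizes $\bar G$, and that the paper leaves the infinitude of $\GP=\G^+(U(\calK))$ implicit while you spell it out.
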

	
	\begin{proof}
	Since $X(\calP)$ is the kernel of the natural covering from $\GP$ to $\GP \comix \GcP$, it suffices to
	show that $\GP \comix \GcP$ is finite. Now, suppose that $\s_{n-1} = \eps$ in $\GK \comix \G^+(\ch{\calK})$.
	Then $\s_{n-1} = \eps$ in $\GP \comix \GcP$. Furthermore, the relation $(\s_{n-1} \s_n)^2 = \eps$ also holds
	in $\GP \comix \GcP$, so we get that $\s_n^2 = \eps$. Suppose instead that $\s_{n-1} = \s_{n-2}$ in
	$\GK \comix \G^+(\ch{\calK})$ (and thus in $\GP \comix \GcP$). We have that $(\s_{n-2} \s_{n-1} \s_n)^2
	= \eps$, and thus $(\s_{n-1}^2 \s_n)^2 = \eps = (\s_{n-1} \s_n)^2$. Therefore, $\s_{n-1} \s_n \s_{n-1}
	= \s_n$. So we get that $(\s_{n-1} \s_n)^2 = \s_n^2 = \eps$.

	Now, $\s_n$ commutes with $\s_i$ if $i < n-2$ \cite{SW1}. Using the relations $(\s_{n-1} \s_n)^2 = \eps$
	and $(\s_{n-2} \s_{n-1} \s_n)^2 = \eps$, we can conclude that $\s_n \s_{n-1} = \s_{n-1}^{-1} \s_n$ and that 
	$\s_n \s_{n-2} = \s_{n-2} \s_{n-1}^2 \s_n$. Therefore, given any word $w$ in $\GP \comix \GcP$, we can bring
	every $\s_n$ to the right and write $w = u \s_n^k$, where $u$ is a word in $\GK \comix \G^+(\ch{\calK})$.
	Therefore, $\GP \comix \GcP$ is finite; in particular, it is at most twice as large as
	$\GK \comix \G^+(\ch{\calK})$.
	\end{proof}

	\begin{theorem}
	\label{thm:infinite-extensions}
	Let $\calK$ be a finite chiral polytope with directly regular facets and group $\GK = \langle \s_1, \ldots,
	\s_{n-1} \rangle$. Let $\calP = U(\calK)$, and let $\calQ$ be a finite directly regular polytope.
	If $\s_{n-1} = \eps$ or $\s_{n-1} = \s_{n-2}$ in $\GK \comix \G^+(\ch{\calK})$, then $\calP \mix
	\calQ$ is chiral, with an infinite chirality group.
	\end{theorem}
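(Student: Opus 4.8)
The plan is to derive this statement directly from \lref{lem:ch-gp-of-extension} together with \tref{thm:infinite-chirality-gp}; the substantive work has already been done in those two results, so the proof amounts to checking that their hypotheses are met and then chaining the conclusions. First I would confirm that \lref{lem:ch-gp-of-extension} applies to $\calK$ and $\calP = U(\calK)$. That lemma asks for a chiral polytope $\calK$ with directly regular facets such that $\GK \comix \G^+(\ch{\calK})$ is finite. The first two requirements are part of our hypotheses. For the finiteness, I would observe that $\GK \comix \G^+(\ch{\calK})$ is a quotient of $\GK$: writing $\GK = W^+/M$ we have $\G^+(\ch{\calK}) = W^+/\ch{M}$, so by \pref{prop:comix} the comix is $W^+/M\ch{M}$, and $M \leq M\ch{M}$. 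Since $\calK$ is finite, $\GK$ is finite, and hence so is the comix---this is exactly the parenthetical example flagged in \lref{lem:ch-gp-of-extension}. The remaining ingredient, that $\s_{n-1} = \eps$ or $\s_{n-1} = \s_{n-2}$ holds in $\GK \comix \G^+(\ch{\calK})$, is precisely the hypothesis of the theorem. Thus \lref{lem:ch-gp-of-extension} gives that $X(\calP)$ is infinite.

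Next I would invoke \tref{thm:infinite-chirality-gp}, taking its chiral polytope to be $\calP = U(\calK)$---now known to have an infinite chirality group---and its finite polytope to be the given directly regular $\calQ$. The conclusion is that $X(\calP \mix \calQ)$ is infinite. Finally, an infinite chirality group is in particular nontrivial, and since $\calP \mix \calQ$ is a chiral or directly regular pre-polytope whose direct regularity is equivalent to triviality of its chirality group, a nontrivial $X(\calP \mix \calQ)$ forces $\calP \mix \calQ$ to be chiral. This establishes both claims simultaneously.

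I do not expect a genuine obstacle, since everything reduces to correctly matching hypotheses. The one point deserving explicit attention is the finiteness of $\GK \comix \G^+(\ch{\calK})$: this is the sole place where the assumption that $\calK$ (hence $\GK$) is finite is used, and it is what unlocks \lref{lem:ch-gp-of-extension}. I would therefore spell out that the comix is a quotient of the finite group $\GK$ rather than leave it implicit. I would also take care not to overclaim: the theorem asserts only the chirality of $\calP \mix \calQ$, not its polytopality, which (as with the other mixing criteria in this section) would have to be verified separately.
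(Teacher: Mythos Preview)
Your proposal is correct and follows essentially the same route as the paper: invoke \lref{lem:ch-gp-of-extension} to conclude that $X(\calP)$ is infinite, then apply \tref{thm:infinite-chirality-gp} to conclude that $X(\calP \mix \calQ)$ is infinite (hence nontrivial, so $\calP \mix \calQ$ is chiral). Your additional care in spelling out why $\GK \comix \G^+(\ch{\calK})$ is finite and in noting that polytopality is not asserted is welcome but does not depart from the paper's argument.
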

	
	\begin{proof}
	By \lref{lem:ch-gp-of-extension}, the conditions given on $\calK$ suffice to ensure that $\calP$ has
	an infinite chirality group. Then \tref{thm:infinite-chirality-gp} applies to show that $\calP \mix
	\calQ$ also has an infinite chirality group.
	\end{proof}

	For example, let $\calK$ be the chiral polyhedron $\{4, 4\}_{(b, c)}$ with $p = b^2 + c^2$ an odd prime.
	Then $\GK \comix \G^+(\ch{\calK}) = [4, 4]_{(1, 0)}$, in which $\s_1 = \s_2$ \cite{cox-index}.
	Therefore, \tref{thm:infinite-extensions} says that we can mix $U(\calK)$ with any finite directly
	regular polytope to obtain a polytope with infinite chirality group.
	
	Note that in \tref{thm:infinite-extensions}, we had to choose $\calK$ to have directly regular facets.
	Since the $(n-2)$ faces of a chiral polytope must be directly regular, there is no universal polytope
	$U(\calP)$ if $\calP$ has chiral facets. In particular, there is no universal polytope $U(U(\calK))$.
	However, using mixing, we can build a chiral polytope with directly regular facets $\calK \mix \ch{\calK}$:
	
	\begin{theorem}
	\label{thm:pseudo-extension}
	Let $\calK$ be a finite totally chiral polytope of type $\{p_1, \ldots, p_{n-2}\}$ with directly regular facets. Let $\calP = U(\calK)$, and let $\calQ$ be the finite directly regular polytope of type $\{p_1, 
	\ldots, p_{n-2}, 2\}$ with facets isomorphic to $\calK \mix \ch{\calK}$. Then $\calP \mix \calQ$ is a chiral
	polytope with directly regular facets.
	\end{theorem}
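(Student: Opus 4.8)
The plan is to establish, in order, that $\calP \mix \calQ$ is polytopal, that its facets are directly regular, and that it is itself chiral. The first two are routine given the machinery above, and the real content lies in the chirality, which I would obtain through the chirality group rather than by a direct analysis of $\GP \mix \GQ$.

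I would first identify the facets. The facet rotation group of $\calP \mix \calQ$ is $\langle \beta_1, \ldots, \beta_{n-2}\rangle$, which is by construction the mix $\GK \mix \G^+(\calK \mix \ch{\calK})$ of the facet groups of $\calP$ and $\calQ$. Since $\calK \mix \calK = \calK$ (the mix of a group with itself is its diagonal), this collapses to $\G^+(\calK \mix \ch{\calK})$; concretely, writing $\GK = W'/M_0$ for the appropriate copy of $W^+$ in rank $n-1$, the facet group is $W'/(M_0 \cap M_0 \cap \ch{M_0}) = W'/(M_0 \cap \ch{M_0})$. Hence the facets of $\calP \mix \calQ$ are $\calK \mix \ch{\calK}$, which is directly regular, being the minimal directly regular cover of $\calK$. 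For polytopality, I would observe that the facets $\calK \mix \ch{\calK}$ of $\calQ$ cover the facets $\calK$ of $\calP$, and then apply \pref{prop:facets-cover} to $\calQ \mix \calP = \calP \mix \calQ$.

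The crux is showing $\calP \mix \calQ$ is not directly regular. The key observation is that ``$\calK$ is totally chiral'' says exactly that the kernel of the natural map $\GK \to \GK \comix \G^+(\ch{\calK})$ is all of $\GK$, i.e.\ that $\GK \comix \G^+(\ch{\calK})$ is trivial. In particular the last generator $\s_{n-2}$ of $\GK$ equals $\eps$ there, and $\calK$ has directly regular facets by hypothesis, so \lref{lem:ch-gp-of-extension} applies to $\calP = U(\calK)$ and yields that $X(\calP)$ is infinite. Since $\calQ$ is a finite directly regular polytope, \tref{thm:infinite-chirality-gp} then gives that $X(\calP \mix \calQ)$ is infinite, hence nontrivial; so $\calP \mix \calQ$ is not directly regular. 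Being polytopal, it must therefore be chiral, and together with the facet computation this shows it is a chiral polytope with directly regular facets.

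The step I expect to require the most care is the bridge in the previous paragraph: translating total chirality of $\calK$ into the precise hypothesis of \lref{lem:ch-gp-of-extension}, and in particular checking that the rank and index conventions line up, since that lemma is phrased for a rank-$n$ base polytope extended to rank $n+1$, whereas here $\calK$ has rank $n-1$. Once that is in place the argument is essentially automatic: the infinitude of $X(\calP)$ propagates through the mix for free, and no further structural facts about $\calQ$ are needed beyond its being finite, directly regular, and having facets $\calK \mix \ch{\calK}$.
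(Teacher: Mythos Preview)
Your proof is correct and follows essentially the same route as the paper: total chirality of $\calK$ makes $\GK \comix \G^+(\ch{\calK})$ trivial, so \tref{thm:infinite-extensions} (which you have unpacked into \lref{lem:ch-gp-of-extension} plus \tref{thm:infinite-chirality-gp}) gives chirality, and \pref{prop:facets-cover} gives polytopality since the facets $\calK \mix \ch{\calK}$ of $\calQ$ cover the facets $\calK$ of $\calP$. Your explicit verification that the facets of $\calP \mix \calQ$ are $\calK \mix \ch{\calK}$, hence directly regular, is a useful addition that the paper's proof leaves implicit, and your remark about the rank shift in applying \lref{lem:ch-gp-of-extension} is well taken and causes no trouble.
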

	
	\begin{proof}
	Since $\calK$ is totally chiral, $\GK \comix \G^+(\ch{\calK})$ is trivial. Then by 
	\tref{thm:infinite-extensions}, the mix $\calP \mix \calQ$ is chiral. Furthermore, the facets of
	$\calQ$ cover the facets of $\calP$, so by \pref{prop:facets-cover}, the mix is polytopal.
	\end{proof}
	
	Unfortunately, \tref{thm:pseudo-extension} cannot be repeatedly applied, because it requires that we
	start with a finite chiral polytope, and it produces an infinite chiral polytope. Perhaps with some more
	careful analysis, a similar result may be obtained for infinite chiral polytopes.
	
	Finally, we note that \lref{lem:ch-gp-of-extension}, \tref{thm:infinite-extensions}, and
	\tref{thm:pseudo-extension} all still apply when $\calP$ is any infinite chiral polytope with facets 
	isomorphic to $\calK$ -- the universality of $U(\calK)$ is not necessary.
	
	\subsection{Acknowledgements}
	
	I would like to thank E.~Schulte for helpful comments and discussions.

\end{document}